\documentclass [intlimits, 12pt]{article}
\frenchspacing

\sloppy

\headsep = -10mm
\textheight = 21.2cm
%\textwidth = 14.3cm

%\setcounter{secnumdepth}{1}

%%%%%%%%%%%%%%%%%%%%%%%%%%%%%%%%%%%%%%%%

\usepackage[mathscr]{eucal}
\usepackage{amsmath}
\usepackage{amsthm}
\usepackage{amssymb}
\usepackage{amsfonts} % f"urs \mathbb Alphabet 
\usepackage{mathrsfs}
\usepackage{amscd}
\usepackage{enumerate} %um Numerierungsz"ahler hochz"ahlen zu k"onnen
\usepackage{color}%,wrapfig} % f"ur Bilder
\usepackage{layout}
\usepackage{stmaryrd} % f"ur doppeltgestrichene Klammern und \lightning
\usepackage{float} % fuer Positionierung von figures
\usepackage{longtable}
%\DeclareGraphicsExtensions{.eps}
\usepackage{mathtools} % for /abs
\usepackage[pdftex]{graphicx}
% DIN A4 hat 210mm Breite und 297mm H"ohe...

\usepackage{setspace} % fuer Zeilenabstandsveraenderung

%%%%%%%%%%%%%%%%%%%%%%%%%%%%%%%%%%%%%%%%%%%%%%%%%

\newtheorem{Theorem}[equation]{Theorem}

\newtheorem{Proposition}[equation]{Proposition}
\newtheorem{Corollary}[equation]{Corollary}
\newtheorem{Lemma}[equation]{Lemma}

\newtheorem{Remark}[equation]{Remark}

\newtheorem{Definition and Proposition}[equation]{Definition and Proposition}
\newtheorem{Definition and Theorem}[equation]{Definition and Theorem}
\newtheorem{Definition and Remark}[equation]{Definition and Remark}
\newtheorem{Definition and Lemma}[equation]{Definition and Lemma}
\newtheorem{Definition and Example}[equation]{Definition and Example}

\newtheorem*{IntroTheoremA}{Theorem A}
\newtheorem*{IntroTheoremB}{Theorem B}
\newtheorem*{IntroTheoremC}{Theorem C}
%%%%%%%%%%%%%%%%%%%%%%%%%%%%%%%%%%%%%%%%%%%%%%%

% Layout-Befehle

\newcommand{\purge}[1]{} % blendet alles in {...} aus, wird aufgerufen via
                         % \purge{das wird ausgeblendet}
                         % Achtung, keine rote Textfarbe!

\newcommand{\vsp}{\vspace{2mm}}

%%%%%%%%%%%%%%%%%%%%%%%%%%%%%%%%%%%%%%%%%%%%%%%%%%%%%%%%%%%

\def\epsilon{\varepsilon}
\def\phi{\varphi}

\newcommand{\ga}{\gamma}

\newcommand{\lam}{\lambda}

\newcommand{\om}{\omega}

%% griechische GROSSbuchstaben

% griechisch dot

\newcommand{\gadot}{{\dot{\ga}}}

% griechisch schlange dot

% griechisch schlange

%griechisch quer

% griechisch hut

%Abk"urzungen f"ur lateinische Buchst./Symb.

\newcommand{\pti}{{\ti{p}}}

\newcommand{\uti}{{\ti{u}}}

\newcommand{\zdot}{{\dot{z}}}

%% latin hat

\newcommand{\Mhat}{{\hat{M}}}

%%%%%%%%%%%%%%%%%%%%%%%%%%%%%%%%%%%%%%%%%%%%%%%%%%%%%%%%%%%%%%%

% reelle, komplexe, ganze, natuerliche Zahlen

\def\N{{\mathbb N}}

\def\R{{\mathbb R}}

\def\Q{{\mathbb Q}}
\def\Z{{\mathbb Z}}

% mathcal-Zeichen

\newcommand{\mcH}{\mathcal H}

\newcommand{\mcM}{\mathcal M}

% mathcal tilde

% mathcal hat

\newcommand{\mcMhat}{\widehat{\mathcal M}}

% mathfrak zeichen GROSS

%% mathfrak Zeichen klein

\newcommand{\mfc}{\mathfrak c}

\newcommand{\mfh}{\mathfrak h}

%% \mathscr Grossbuchstaben

% Sonderzeichen

%allgemein mathematische Abk"urzungen: LateX-Befehlabk"urzungen

\newcommand{\ti}{\tilde}
\newcommand{\x}{\times}
\newcommand{\del}{\partial}

%%%%%%%%%%%%%%%%%%%%%%%%%%%%%%%%%%%%%

%Syntaxabkuerzungen

\newcommand{\beq}{\begin{equation}}
\newcommand{\eeq}{\end{equation}}
\newcommand{\beqs}{\begin{equation*}}
\newcommand{\eeqs}{\end{equation*}}

%\newcommand{\txt}{\textstyle}
%\newcommand{\txtint}{\txt \int}
%\newcommand{\txtsum}{\txt \sum}
%\newcommand{\txtfrac}[2]{{\txt \frac{#1}{#2}}}

% selbstgebaute allg. Mathe-Formeln

\DeclarePairedDelimiter\abs{\lvert}{\rvert}

%%%%%%%%%%%%%%%%%%%%%%%%%%%%%%%%%%%%%%%%%%%%%%%%%%%%%%%%%%%

%Doktorarbeitsspezifische R"aume

%\newcommand{\ht}{H_t}

\newcommand{\mcHpr}{{\mathcal H}_{pr}}
\newcommand{\mcHprti}{{\ti{\mathcal H}_{pr}}}

%%%%%%%%%%%%%%%%%%%%%%%%%%%%%%%%%%%%%%%%%%%%%%%%%%%%%%%%%%%

%mathematische Begriffe

\DeclareMathOperator{\Crit}{Crit}

\DeclareMathOperator{\Diff}{Diff}

\DeclareMathOperator{\Fix}{Fix}

\DeclareMathOperator{\grad}{grad}

\DeclareMathOperator{\Ham}{Ham}

\DeclareMathOperator{\Id}{Id}
\DeclareMathOperator{\Img}{Im}
\DeclareMathOperator{\Ind}{Ind}

\DeclareMathOperator{\rk}{rk}

\DeclareMathOperator{\Symp}{Symp}

%%%%%%%%%%%%%%%%%%%%%%%%%%%%%%%%%%%%%%%%%%%%%%%%%%%%%%%%%%%	

%Formulas

%%%%%%%%%%%%%%%%%%%%%%%%%%%%%%%%%%%%%
%%%%%%%%%%%%%%%%%%%%%%%%%%%%%%%%%%%%%
% Four slash macros:

% slashi works best for:
%a,c,e,f,g,h,k,o,r,s,u,v,x,y,z,I,J,S,T,V,Y,Z,\epsilon,\varepsilon,\partial
%
\def\slashii#1{\setbox0=\hbox{$#1$}             % set a box for #1 
\dimen0=\wd0                                 % and get its size
\setbox1=\hbox{\sl/} \dimen1=\wd1            % get size of /
\ifdim\dimen0>\dimen1                        % #1 is bigger
\rlap{\hbox to \dimen0{\hfil\sl/\hfil}}   % so center / in box
#1                                        % and print #1
\else                                        % / is bigger
\rlap{\hbox to \dimen1{\hfil$#1$\hfil}}   % so center #1
\hbox{\sl/}                               % and print /
\fi}                                         %
% \slashii works best for:
% b,i,l,n,t,w,x,y,z,A,B,E,L,Q,W
%
\def\slashiii#1{\setbox0=\hbox{$#1$}#1\hskip-\wd0\hbox to\wd0{\hss\sl/\/\hss}}
% \slashiii works best for:
% d,f,j,l,\ell,m,w,x,z,C,D,F,G,H,K,L,M,N,O,P,Q,U,W,X,\Nabla,\partial
%

% \slashiv works best for:
% e,g,p,q,y,z,
%\endinput
%%%%%%%%%%%%%%%%%%%%%%%%%%%%%%%%%%%%%
%%%%%%%%%%%%%%%%%%%%%%%%%%%%%%%%%%%%%
 %% fuer Arial \slashii statt \slashi bei times new roman!!

%%%%%%%%%%%%%%%%%%%%%%%%%%%%%%%%%%%%%%%%%%%
%%%%%%%%%%%%%%%%%%%%%%%%%%%%%%%%%%%%%%%%%%%

%%%%%%%%%%%%%%%%%%%%%%%%%%%%%%%%%%%%%%
%%%%%%%%%%%%%%%%%%%%%%%%%%%%%%%%%%%%

%%%%%%%%%%%%%%%%%%%%%%%%% LABEL

%% primary FH

%\newcommand{\refmuZcomp}{(\ref{mu Z-comp})}
\newcommand{\refpositionprim}{Lemma \ref{position prim}}

%% section classification 

\newcommand{\reffirstIntersection}{Lemma \ref{firstIntersection}}
\newcommand{\refrefineclassif}{Proposition \ref{refineclassif}} 
\newcommand{\refoneiterate}{Theorem \ref{oneiterate}}
\newcommand{\refplace}{Corollary \ref{place}}

% section Morse inequalities

\newcommand{\refhomoclinicMorse}{Theorem \ref{homoclinicMorse}}

\newcommand{\refnotorsion}{Theorem \ref{notorsion}}

\newcommand{\refquotient}{Lemma \ref{quotient}}

%%%%%%%%%%%%%%%%%%%%%%%%%%%%%%%%%%%%%%%%%%%%%%%%%%%%%%%%%%%%%%%%%%%%%%%%%%%%%%%%%%%%%%%%%%%%%%%%%%
%%%%%%%%%%%%%%%%%%%%%%%%%%%%%%%%%%%%%%%%%%%%%%%%%%%%%%%%%%%%%%%%%%%%%%%%%%%%%%%%%%%%%%%%%%%%%%%%%%%
%%%%%%%%  begin document  %%%%%%%%%%%%%%%%%%%%%%%%%%%%%%%%%%%%%%%%%%%%%%%%%%%%%%%%%%%%%%%%%%%%%%%%%
%%%%%%%%%%%%%%%%%%%%%%%%%%%%%%%%%%%%%%%%%%%%%%%%%%%%%%%%%%%%%%%%%%%%%%%%%%%%%%%%%%%%%%%%%%%%%%%%%%%
%%%%%%%%%%%%%%%%%%%%%%%%%%%%%%%%%%%%%%%%%%%%%%%%%%%%%%%%%%%%%%%%%%%%%%%%%%%%%%%%%%%%%%%%%%%%%%%%%%%

\begin{document}

\title{Computational complexity, torsion-freeness of homoclinic Floer homology, and homoclinic Morse inequalities.}

\author{Sonja Hohloch \\ {\small sonja.hohloch@uantwerpen.be}\\ {\small Department of Mathematics and Computer Science} \\ {\small University of Antwerp} \\ {\small Middelheimlaan 1} \\ {\small B-2020 Belgium}}

\date{\today}

\maketitle

\begin{abstract}
\noindent
Floer theory was originally devised to estimate the number of 1-periodic orbits of Hamiltonian systems.
In earlier works, we constructed Floer homology for homoclinic orbits on two dimensional manifolds using combinatorial techniques. In the present paper, we study theoretic aspects of computational complexity of homoclinic Floer homology. More precisely, for finding the homoclinic points and immersions that generate the homology and its boundary operator, we establish sharp upper bounds in terms of iterations of the underlying symplectomorphism. This prepares the ground for future numerical works.

\noindent
Although originally aimed at numerics, the above bounds provide also purely algebraic applications, namely
\begin{enumerate}[1)]
\setstretch {0.5}
 \item 
 torsion-freeness of primary homoclinic Floer homology,
 \item
 Morse type inequalities for primary homoclinic orbits.
\end{enumerate}
\end{abstract}

%%%%%%%%%%%%%%%%%%%%%%%%%%%%%%%%%%%%%%%%%%%%%%%%%%%%%%%%%%%%%%%%%%%%%%%%%%%%%%%%%%%%%%%%%%%%%%%%%%%%
%%%%%%%% new section %%%%%%%%%%%%%%%%%%%%%%%%%%%%%%%%%%%%%%%%%%%%%%%%%%%%%%%%%%%%%%%%%%%%%%%%%%%%%%%%
%%%%%%%%%%%%%%%%%%%%%%%%%%%%%%%%%%%%%%%%%%%%%%%%%%%%%%%%%%%%%%%%%%%%%%%%%%%%%%%%%%%%%%%%%%%%%%%%%%%%

\section{Introduction}

This section is subdivided into four parts: First we recall some essential notions from homoclinic dynamics, then we give a brief introduction to classical Floer theory. Subsequently we explain the intuition behind homoclinic Floer homology before we summarize the main results of the present paper. 

%%%%%%%%%%%%%%%%%%%%%%%%%%%%%%%%%%%%%%%%%%%%%%%%%%%%%%%%%%%%%%%%%%%%%%%%%%%%%%%%%%%%%%%%%%%%%%%%%%%
%%%%%%%%%%%%%%%%%%%  new subsection  %%%%%%%%%%%%%%%

\subsection{Notions in homoclinic dynamics}

The orbit type we are interested in are the so-called homoclinic orbits whose definition we will recall in the following: Let $N$ be a manifold and $f\in \Diff(N)$ a diffeomorphism. $x \in N$ is an {\em $m$-periodic point} if there is $m \in \N$ such that $f^m(x)=x$. For $m=1$, such an $x$ is usually called a {\em fixed point} and the set of fixed points is denoted by $\Fix(f)$.

A fixed point $x$ is called {\em hyperbolic} if the eigenvalues of the linearization $Df(x)$ of $f$ in $x$ have modulus different from $1$. The {\em stable manifold} of a hyperbolic fixed point $x$ is given by $W^s(f,x):= \{p \in N \mid\lim_{n \to \infty} f^n(p)=x\}$ and the {\em unstable manifold} is given by $W^u(f,x):= \{p \in N \mid\lim_{n \to -\infty} f^n(p)=x\}$. 
The connected components of $W^s(f,x) \backslash \{x\}$ resp.\ $W^u (f,x) \backslash \{x\}$ are called the {\em branches} of $W^s(f,x)$ resp.\ $W^u(f,x)$. 

A diffeomorphism $f$ is called {\em $W$-orientation preserving w.r.t.\ $x \in \Fix(f)$} if each branch of the stable and unstable manifolds of $x$ is mapped to itself.

The intersection points of the stable and unstable manifold of $x$ are called {\em homoclinic points of $x$} and we denote the set of homoclinic points of $x$ by $\mcH(f,x) := W^s(f,x) \cap  W^u(f,x)$. An example is sketched in Figure \ref{tangle}.

\begin{figure}[h]
\begin{center}

\input{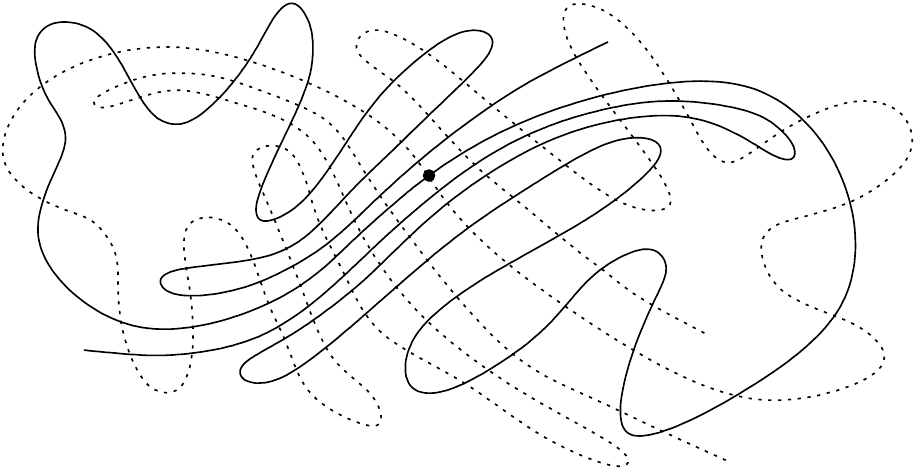_t}

\caption{The intersection behaviour of transversely intersecting stable and unstable (dotted) manifold of a hyperbolic fixed point.}
\label{tangle}

\end{center}
\end{figure}

An {\em orbit} associated to a point $p \in N$ is the set $\{ f^n(p) \mid n \in \Z\}$. If $p$ is a periodic resp.\ homoclinic point, we call the orbit periodic resp.\ homoclinic. Obviously, the stable and unstable manifolds are invariant under the action 
$$\Z \x N \to N, \quad (m, p) \mapsto f^m(p).$$

Homoclinic points are somehow the `next more complicated' orbit type after fixed points and periodic points. The existence of (transverse) homoclinic points was discovered by Poincar\'e \cite{poincare1}, \cite{poincare2} around 1890 when he worked on the $n$-body problem. In 1935, Birkhoff \cite{birkhoff} noticed the existence of high-periodic points near homoclinic ones, but it took until Smale's horseshoe formalism in the 1960s to obtain a formal and precise description of the implied dynamics. Since then, homoclinic points have been studied by various means like perturbation theory, calculus of variations and numerical approximation, but many questions are still open.

%%%%%%%%%%%%%%%%%%%%%%%%%%%%%%%%%%%%%%%%%%%%%%%%%%%%%%%%%%%%%%%%%%%%%%%%%%%%%%%%%%%%%%%%%%%%%%
%%%%%%%%%%%%%%%%%%   new subsection  %%%%%%%%%

\subsection{Classical Floer theory}

In order to introduce Floer theory we need some preparations.
A smooth 2n-dimensional manifold $M$ is {\em symplectic} if it admits a closed nondegenerate 2-form $\om$. For example, surfaces equipped with their volume forms are symplectic.  
The class of transformations associated with symplectic geometry are those diffeomorphisms that leave the symplectic form $\om$ invariant, namely the group of {\em symplectomorphisms} $\Symp(M,\om):= \{ f \in \Diff(M) \mid f^*\om = \om\}$. There is a subgroup that is particularly important for Floer theory, namely the group of Hamiltonian diffeomorphisms $\Ham(M, \om)$ which is defined as follows.
Given a smooth function $F: M \x \mathbb S^1 \to \R$, we set $F_t:=F(\cdot, t)$ and define its (nonautonomous) {\em Hamiltonian vector field} $X_t^F$ via $\om(X_t^F, \cdot)=-dF_t(\cdot)$. Then $\zdot(t)=X^F_t(z(t))$ is the associated {\em Hamiltonian equation} and its (nonautonomous) flow is called {\em Hamiltonian flow}. A {\em Hamiltonian diffeomorphism} is a symplectomorphisms which can be written as the time-1 map $\phi_1$ of a Hamiltonian flow $\phi_t$. A Hamiltonian diffeomorphism is called {\em nondegenerate} if its graph intersects the diagonal in $M \x M$ transversely.

\vsp

Since symplectic geometry provides the framework for Hamiltonian systems it shows up naturally in physics, but, since the 1960s, symplectic geometry has also been studied for its own sake. Moser and others investigated for instance the distinction between symplectic and volume preserving geometry: in dimension two, being symplectic is the same as being volume preserving, but in dimension strictly higher than two, it differs. Symplectomorphisms are volume preserving w.r.t. the volume form $\om^n:=\om \wedge \dots \wedge \om$, but not all volume preserving maps preserve a symplectic form.

\vsp

V.\ I.\ Arnold conjectured in the 1960s that the number of fixed points of a nondegenerate Hamiltonian diffeomorphism on a closed, symplectic manifold is greater or equal to the sum over the Betti numbers of the underlying manifold. 
Arnold's conjecture was open for a long time until it was proven for the $2n$-dimensional torus by Conley and Zehnder in 1983. Floer \cite{floer1, floer2, floer3} achieved a breakthrough by turning the fixed point problem into an {\em intersection problem}:
he considered the fixed points of a Hamiltonian diffeomorphism as intersection points of the graph of the Hamiltonian diffeomorphism with the diagonal in the symplectic manifold $(M \x M, \om \oplus (-\om))$.
In this setting, the diagonal and the graph turn out to belong to a special class of submanifolds, the so-called {\textit{Lagrangian submanifolds} (submanifolds on which the symplectic form vanishes and whose dimensions are half the dimension of $M$); and most important, Lagrangian submanifolds have good properties concerning (Fredholm) analysis.
Moreover, the intersection points of the graph and the diagonal can be seen as critical points of the symplectic action functional. Floer considered this functional as some kind of Morse function and went along to devise some kind of `infinite dimensional Morse theory' for the symplectic action functional. This theory (and its generalizations) is nowadays known as {\em Floer theory}. The associated homology theory is referred to as {\em Floer homology}.
Apart from leading to a proof of Arnold's conjecture, Floer theory gave rise to many other applications in symplectic geometry, dynamical systems and other fields of mathematics and is vividly studied nowadays.

\vsp

Roughly, the construction of Lagrangian Floer homology goes as follows. For details we refer the reader to Floer's original works \cite{floer1}, \cite{floer2}, \cite{floer3} and Fukaya $\&$ Oh $\&$ Ohta $\&$ Ono \cite{fo3a}, \cite{fo3b}. 

Consider two transversely intersecting, `sufficiently nice' Lagrangian submanifolds $L$ and $L'$ lying in a `sufficiently nice' symplectic manifold $ (M, \om)$. Then, to two intersection points $p,q \in L \cap L'$, we can assign a relative index $I(p,q) \in \Z$. By fixing one $\pti \in L \cap L'$ as reference point one obtains an index $I(p):=I(p,\pti) \in \Z$. Now define the $k$th Floer chain group (with $\Z\slash 2 \Z$-coefficients) as the free group generated by all intersection points of index $k$, i.e.\
$$
CF_k:= CF_k(L, L'):= \bigoplus_{\stackrel{p \in L \cap L'}{I(p)=k}} \Z\slash 2 \Z \ p
$$
A complex structure that varies with its footpoint is usually called an {\em almost complex structure}. The corresponding generalisation of holomorphic maps are called {\em pseudo-holomorphic} maps. Now consider $p^-, p^+ \in L \cap L'$. The space of pseudo-holomorphic maps $u: \R \x [0,1] \to M$ with $u(\R \x \{0\}) \subseteq L$ and $u(\R \x \{1\}) \subseteq L'$ satisfying $\lim_{s \to -\infty}u(s, \cdot)=p^-$ and $\lim_{s \to +\infty}u(s, \cdot)=p^+$ is denoted by $M(p^-, p^+)$. This space carries the $\R$-action
$$
\R \x M(p^-, p^+) \to M(p^-, p^+), \qquad (r, u) \mapsto u( \cdot + r, \cdot).
$$
Dividing by this action yields 
$$
\Mhat(p^-, p^+):= M(p^-, p^+) \slash \R
$$
which has in fact dimension $I(p^-, p^+)-1$. For $I(p^-, p^+)=1$, it is zero dimensional. Being compact, it has thus cardinality $|\Mhat(p^-, p^+)| <\infty$. Counting modulo 2 to be compatible with the $\Z \slash 2 \Z$-coefficients, we define the boundary operator
$$
d_* : CF_* \to CF_{*-1}, \qquad d p^- := \sum_{\stackrel{ p^+ \in L \cap L'}{I(p^-, p^+)=1}} (|\Mhat(p^-, p^+)|\ mod\ 2) \ p^+
$$
on the generators and extend it by linearity. It satisfies $d_{*-1} \circ d_* =0$ turning $(CF_*, \del_*)$ into a chain complex. The associated homology
$$
HF_*(L, L'):= \ker d_* \slash \Img d_{*+1} 
$$
is called {\em Lagrangian Floer homology} of $L$ and $L'$.

%%%%%%%%%%%%%%%%%%%%%%%%%%%%%%%%%%%%%%%%%%%%%%%%%%%%%%%%%%%%%%%%%%%%%%%%%%%%%%%%%%%
%%%%%%%%%  new subsection  %%%%%%

\subsection{The motivation for homoclinic Floer theory}

How does homoclinic Floer homology link homoclinic points to Floer theory? Remember that homoclinic points and Floer theory both involve {\em intersecting} submanifolds. Thus we have to check if the intersection problem of stable and unstable manifolds fits the requirements of Floer's setting, i.e. are they Lagrangian? The answer is yes, under certain conditions: If we work with symplectomorphisms instead of just diffeomorphisms, the stable and unstable manifolds are always Lagrangian. But in Floer's setting (and its generalizations), the Lagrangians are usually compact or at least `sufficiently nice'. Unfortunately the stable and unstable manifold are usually only injectively immersed and give rise to an abundance of intersection points as sketched in Figure \ref{tangle}.

Classical Floer theory knows certain techniques to deal with `nice non-compactness', but they fail for stable and unstable manifolds --- there are just `too many' intersection points. Nevertheless, we will see in Section \ref{types} how one can define a Floer theory for homoclinic points.

\vsp

Up to our knowledge, there are few works apart from our papers \cite{hohloch1, hohloch2} where homoclinic orbits are studied with symplectic methods or means related to Floer theory: Hofer $ \& $ Wysocki \cite{hofer-wysocki} use pseudo-holomorphic curves and Fredholm theory. Cieliebak $\&$ S\'er\'e \cite{cieliebak-sere} combine variational techniques and pseudo-holomorphic curves. 
And Lisi \cite{lisi} generalizes Coti Zelati $\&$ Ekeland $\&$ S\'er\'e \cite{coti zelati-ekeland-sere} using Lagrangian embedding techniques. 

%%%%%%%%%%%%%%%%%%%%%%%%%%%%%%%%%%%%%%%%%%%%%%%%%%%%%%%%%%%%%%%%%%%%%%%%%%%%%%%%%%%%%%%%%%%%%%%%%%%%%
%%%%%%%%%%%  new subsection  %%%%%%%%%%%%%%%%%%%%%%%%%%%%%%%%%%%%%%%%%%%%%%%%%%%%%%%%%%%%%%%%%%%%%%%%%%

\subsection{Main results}

Below in Section \ref{types}, we will describe in detail the construction of homoclinic Floer homology. Briefly, in order to compute homoclinic Floer homology, we have to locate the generators of the homoclinic chain groups and then compute the boundary operator. The generators are the so-called primary homoclinic points which are defined in \eqref{primary}. The computation of the boundary operator involves finding and counting certain immersions joining two primary homoclinic points. To be more precise, consider a $W$-orientation preserving $\phi \in \Symp(\R^2)$ with $x \in \Fix(\phi)$ hyperbolic.
The generators, i.e.\ the primary homoclinic points, are intersection points of the associated (un)stable manifolds. Since the (un)stable manifolds are invariant under the $\Z$-action induced by $\phi$, the orbit of a generator `runs along the whole length' of the (un)stable manifolds. But these are noncompact and only injectively immersed, not embedded. Therefore, a priori, we do not have any control over the whereabouts of the generators and thus in particular not over immersions joining two of them. Without upper bounds on the `distance' between generators joint by immersions, finding these generators and immersions is hopeless.

The central theorem of this paper improves and maximally strengthens a result from an earlier work (to be precise, Proposition 26 and Lemma 27 in \cite{hohloch1}): 
In Hohloch \cite{hohloch1}, we proved the {\em existence} of an upper bound, but could not give any estimates for it. The present paper gives a sharp upper bound in terms of iterations of $\phi$ and, in addition, also pins down certain signs associateed to the immersions. That way, homoclinic Floer homology becomes accessible and managable for numerics. 

\begin{IntroTheoremA}
Let $\phi \in \Symp(\R^2)$ be $W$-orientation preserving with $x \in \Fix(\phi)$ hyperbolic and use the convention $\phi^0=\Id$. 
 Let $p$ be the startpoint and $q$ be the endpoint of an immersion used in the boundary operator of primary homoclinic Floer homology. Then:
 \begin{enumerate}[1)]
   \item 
  The endpoint $q$ lies between $\phi^{-1}(p)$ and $\phi(p)$ on the stable and/or unstable manifold. 
    \item
  If there exists in addition an immersion with start point $p$ and endpoint $\phi^n(q)$ for some $n \in \Z^{\neq 0}$ then either $n=1$ or $n=-1$. In both cases, the immersions carry opposite signs w.r.t.\ the original one.
   \end{enumerate}
\end{IntroTheoremA}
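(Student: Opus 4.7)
The plan is to combine the $\phi$-equivariance of the geometric setting with the structural constraints imposed by the bigon shape of the immersions contributing to the boundary operator and by the defining property of primary homoclinic points. First I fix on each branch of $W^s(\phi,x)$ and $W^u(\phi,x)$ the natural total order induced by its parametrization; since $\phi$ is $W$-orientation preserving, $\phi$ respects this order, and iteration shifts primary points monotonically along each branch. The segment $[\phi^{-1}(p),\phi(p)]$ on a branch is therefore an intrinsic fundamental-type window containing only finitely many $\phi$-translates of any fixed primary orbit.

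For part (1), let $u$ be an immersion contributing to the differential from $p$ to $q$; its boundary $\del u$ consists of a $W^u$-arc and a $W^s$-arc from $p$ to $q$ meeting only at these two corners. Suppose, for contradiction, that $q$ lies outside $[\phi^{-1}(p),\phi(p)]$ on \emph{both} the stable and the unstable branch through $q$. Then along the $W^u$-arc of $\del u$ at least one of $\phi(p),\phi^{-1}(p)$ lies strictly between $p$ and $q$, and likewise along the $W^s$-arc. Because $\phi^{\pm 1}(p)$ is itself a primary homoclinic point, the primarity condition \eqref{primary} forces the $W^s$- and $W^u$-arcs from $x$ to $\phi^{\pm 1}(p)$ to intersect the interior of $\del u$ in additional transverse points, producing either a self-intersection of $\del u$ or an extra corner of $u$; both outcomes contradict $u$ being a Floer bigon of the boundary operator.

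For part (2), suppose immersions from $p$ to $q$ and from $p$ to $\phi^n(q)$ both exist. Part (1), applied to each pair, places both $q$ and $\phi^n(q)$ in the window $[\phi^{-1}(p),\phi(p)]$ on a common branch (using that $\phi$ preserves branches). Because $\phi$ acts as a monotone shift along each branch, the requirement that both endpoints sit in the same bounded window while also sharing the start point $p$ of the two immersions forces $n\in\{-1,0,+1\}$, and excluding $n=0$ gives $n=\pm 1$. For the sign, apply $\phi^{-n}$ to the second immersion to produce one from $\phi^{\mp 1}(p)$ to $q$. One then has two Floer bigons terminating at $q$, whose starting points $p$ and $\phi^{\mp 1}(p)$ lie on opposite sides of $q$ along each branch by part (1). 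Because the Floer sign of a bigon on an oriented surface is determined by the side from which its boundary arcs enter the outgoing corner, switching sides flips the sign, yielding the asserted cancellation.

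The delicate point of the argument will be the contradiction in part (1): one has to rule out, cleanly and without circular reference to the earlier results of \cite{hohloch1}, that a third primary point can sit on a $W^{s/u}$-boundary arc of an immersed Floer bigon. Reconciling the ``shortest connecting arc'' content of primarity with the no-self-intersection property of the immersion will require a careful topological case analysis; once this is in hand, the sign bookkeeping in part (2) should follow from the orientation-preserving behaviour of $\phi$.
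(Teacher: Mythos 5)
Your overall strategy---confining $q$ to the window $]\phi^{-1}(p),\phi(p)[$, exploiting primarity and the $\phi$-equivariance of the ordering, and deducing the sign flip from $q$ and $\phi^{\pm1}(q)$ landing on opposite sides of $p$---is the same one the paper follows, and your part (2) (the bound $\abs{n}\leq 1$ from the two-fundamental-domain window, and the sign reversal read off from the parametrization direction $o_{pq}$ versus the jump direction $o(W_{pq})$) is essentially sound \emph{conditional on part (1)}. But part (1) is where all the work lies, and your argument for it has a genuine gap. The claimed mechanism---that a primary point $\phi^{\pm1}(p)$ sitting strictly between $p$ and $q$ on one boundary arc of the bigon forces, via primarity, a self-intersection of $\del u$ or an extra corner---is not correct as stated. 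A homoclinic point lying on only one of the two arcs $]p,q[_s$, $]p,q[_u$ is just a transverse crossing of the other invariant manifold through that arc; it creates no self-intersection and no corner. The bigon is actually excluded only when one exhibits a homoclinic point in $]p,q[_s\ \cap\ ]p,q[_u$ (so that \refrefineclassif\ applies), or when primarity of $p$, $q$ or $\phi^n(q)$ itself is violated. Producing such a point in the ``mixed'' configurations (e.g.\ $\phi(p)\in\ ]p,q[_u$ while only $\phi^{-1}(p)\in\ ]p,q[_s$, or when $x$ separates $p$ from $q$ on one of the manifolds) is precisely the content of the paper's four-case, sixteen-subsubcase analysis in the proof of \refoneiterate, and there it requires not just the primarity of $\phi^{\pm1}(p)$ but also $\lam$-lemma-type statements of the form ``there is $k<0$ with $\phi^k(p)\in\ ]q,\phi^n(q)[_s$'', i.e.\ the accumulation of the iterates of $p$ along the branches. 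Your sketch explicitly defers this (``will require a careful topological case analysis''), so the decisive step is missing rather than proved.

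A second, related omission: the ``and/or'' in part (1) is not a stylistic hedge. As \refplace\ makes precise, the conclusion depends on where $x$ sits relative to $]p,q[_s$ and $]p,q[_u$: when $x\in\ ]p,q[_u$ (resp.\ $x\in\ ]p,q[_s$), the endpoint $q$ is only confined to the window on the \emph{stable} (resp.\ \emph{unstable}) manifold, and the branch $W_{pq}$ used to define the sign $m(p,q)$ is the one on which $p$ and $q$ actually cohabit. Your uniform contradiction hypothesis (``$q$ outside the window on both manifolds'') does not engage with these cases, and your part (2) tacitly assumes a common branch and a common window for $q$ and $\phi^n(q)$ on both manifolds. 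To close the argument you would need to reproduce the case distinction of \refplace\ and, within each case, the explicit construction of the obstructing intersection point; at that point your proof coincides with the paper's.
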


\noindent
The above theorem paraphrases the content of \refoneiterate\ and \refplace. The bound in Theorem A is surprisingly low: the endpoint of an immersion is maximally just plus/minus one `iteration interval' away from its start point. Together with the fact that also each primary homoclinic orbit hits an `iteration interval' exactly once (cf.\ \refpositionprim) this considerably reduces the parts of the (un)stable manifolds that have to be searched. The proof of Theorem A is not difficult, but quite tedious since one has to check a large number of (sub)cases.

Although Theorem A was originally aimed at numerical computations of homoclinic Floer homology, it has nevertheless interesting algebraic applications.

\begin{IntroTheoremB}
 Primary homoclinic Floer homology of $W$-orientation preserving symplectomorphisms over $\Z$ is torsion-free.
\end{IntroTheoremB}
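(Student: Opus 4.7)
The plan is to exploit the sharp control provided by Theorem A to show that the boundary operator $d$ of the primary homoclinic Floer chain complex over $\Z$ admits a matrix presentation in which every entry lies in $\{-1,0,+1\}$ and whose Smith normal form consequently has only $0$s and $\pm 1$s on the diagonal. Once this is in place, $\operatorname{im} d$ is a direct summand of $\ker d$, and the homology is free (in particular torsion-free) as a finitely generated abelian group.

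First I would recall the chain complex setup, pass to the $\phi$-quotient via \refquotient\ so each chain group becomes a finitely generated free abelian group, and use \refpositionprim\ to enumerate the primary orbits in each degree by canonical representatives placed in a fundamental domain. For each generator $[p]$, Theorem A(1) confines the endpoints of the immersions contributing to $d[p]$ to the iteration window between $\phi^{-1}(p)$ and $\phi(p)$, so each orbit $[q]$ can contribute at most through the two representatives $q$ and $\phi^{\pm 1}(q)$ lying in this window. Theorem A(2) then says that whenever both representatives appear as endpoints, the associated immersions carry opposite signs, while $\phi^n(q)$ with $|n|\ge 2$ never appears; in the quotient complex these paired contributions cancel, so $\langle d[p], [q]\rangle \in \{-1,0,+1\}$.

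The main obstacle is the final step: passing from ``all entries in $\{-1,0,+1\}$'' to ``all elementary divisors equal to $1$'', which is not automatic for arbitrary $\pm 1$ matrices. My plan is to exploit the locality of $d$ encoded in Theorem A(1), which makes $d$ interact only with orbits whose representatives live in neighbouring iteration windows: ordering generators along the (un)stable manifold should make the matrix of $d$ banded in a uniform way, and a direct Smith-reduction argument on such banded $\pm 1$ matrices should yield the desired form. As a safety net, one can alternatively verify via reduction modulo each prime $p$ that the rank of $d \otimes \Z/p\Z$ is independent of $p$, which by the universal coefficient theorem equally rules out any torsion in $H_*$.
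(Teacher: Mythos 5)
Your first step coincides with the paper's: using \refoneiterate\ you correctly conclude that in the quotient complex each coefficient $m(\langle p\rangle,\langle q\rangle)=\sum_{n}m(p,\phi^{n}(q))$ collapses to a single signed count or to a cancelling pair of opposite signs, hence lies in $\{-1,0,+1\}$. This is exactly Step~1 of the proof of \refnotorsion. (A small slip: the ``passage to the $\phi$-quotient'' is not what \refquotient\ provides; that lemma is the Baumslag--Chandler statement about quotients of a free abelian group by a subgroup generated by integer multiples of basis vectors, and it enters only at the very end of the paper's argument.)

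The genuine gap is the final step, which you flag yourself but do not close. Entries in $\{-1,0,+1\}$ do not force unit elementary divisors --- e.g.\ $\bigl(\begin{smallmatrix}1&1\\1&-1\end{smallmatrix}\bigr)$ has determinant $-2$ and cokernel containing $\Z/2\Z$ --- and neither of your two proposed remedies is carried out. The bandedness route is unsupported: nothing in \refoneiterate\ produces a linear ordering of the orbit classes $\langle q\rangle$ making $\del$ banded (the generators are finitely many $\phi$-orbits, all already represented in a single fundamental window by \refpositionprim), and even a genuinely banded $\pm1$ matrix can have nontrivial Smith normal form, as the $2\times2$ example shows. The mod-$p$ rank comparison is merely a restatement of torsion-freeness via the universal coefficient theorem, with no argument for why the rank of $\del\otimes\Z/p\Z$ should be independent of $p$. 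The paper instead concludes by selecting a basis of $\Img\del_{*+1}$ from among the vectors $\del\langle p\rangle$ and arguing that these are never proper multiples of basis vectors of $\ker\del_{*}$, so that \refquotient\ applies; some structural input of this kind about $\del$ --- beyond the mere size of its matrix entries --- is what your proposal still needs to supply.
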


This will be proven in \refnotorsion. The universal coefficient theorem in homological algebra describes by means of torsion the dependence of homology groups on the chosen coefficient ring. Thus torsion-freeness implies that, for instance, homoclinic Floer homology computed with $\Z$-coefficients is the same as with $\Q$- or $\R$-coefficients. 

\vsp

Since every finitely generated abelian group has a direct sum decomposition in a finitely generated free subgroup and a unique torsion group, therefore torsion-freeness simplifies inequalities and estimates involving the rank of homology groups. The most prominent examples for such inequalities are the Morse inequalities induced by Morse homology (recalled in Section \ref{classicalMorse}). Homoclinic Floer homology also gives rise to such inequalities:

\begin{IntroTheoremC}
 There are Morse type inequalities for primary homoclinic points.
\end{IntroTheoremC}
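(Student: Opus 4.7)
The plan is to follow the classical algebraic route from a chain complex of finitely generated free abelian groups to Morse-type inequalities, applied to the primary homoclinic Floer complex $(CF_\bullet, d_\bullet)$ constructed earlier. Writing $c_k$ for the number of $\phi$-orbits of primary homoclinic points of relative index $k$, equivalently $\rk_\Z CF_k$, and $b_k := \rk_\Z HF_k(\phi, x)$, the target statement is the weak inequality $c_k \geq b_k$ together with the strong inequality
\begin{equation*}
\sum_{i=0}^{k} (-1)^{k-i} c_i \;\geq\; \sum_{i=0}^{k} (-1)^{k-i} b_i
\end{equation*}
for every $k$, with equality in the limit yielding the Euler characteristic identity.

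First I would invoke \refoneiterate\ and \refplace\ to guarantee that the boundary operator $d_k : CF_k \to CF_{k-1}$ reduces to a finite sum on each generator, so that $CF_\bullet$ is a genuine $\Z$-graded complex of finitely generated free abelian groups. Next, I would apply \refnotorsion\ to conclude that each $HF_k$ is itself free abelian, so $b_k$ is the bona fide rank and coincides with $\dim_\Q (HF_k \otimes \Q)$; this simultaneously makes the Betti numbers well-defined integers and keeps ranks additive across short exact sequences of the relevant subquotients.

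Once these two ingredients are in place, the remainder is the standard algebraic manipulation. Setting $Z_k := \ker d_k$ and $B_k := \Img d_{k+1}$, both are subgroups of a finitely generated free abelian group and hence free. The two short exact sequences
\begin{equation*}
0 \to Z_k \to CF_k \to B_{k-1} \to 0 \qquad \text{and} \qquad 0 \to B_k \to Z_k \to HF_k \to 0
\end{equation*}
split, because $B_{k-1}$ and $HF_k$ are free, yielding $c_k = b_k + \rk B_k + \rk B_{k-1}$. The weak inequality $c_k \geq b_k$ is immediate. Telescoping the alternating sum cancels every boundary-rank contribution except a single nonnegative remainder $\rk B_k \geq 0$, which produces the strong inequality above.

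The main obstacle is not the algebra --- essentially formal once one has a chain complex of finitely generated free groups with torsion-free homology --- but verifying that the enumeration is well-posed. Homoclinic tangles contain infinitely many homoclinic points, so $c_k$ must be counted in terms of $\phi$-orbits, and one must check that only finitely many such orbits contribute in each degree and that the grading on the right matches the relative index used to define $HF_k$. This finiteness is precisely what the sharp localisation of \refoneiterate, combined with the orbit-position statement \refpositionprim, supplies, so the whole scheme closes up once the grading and orbit conventions are fixed in accordance with the chain complex constructed in \reftypes.
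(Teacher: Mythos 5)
Your proposal is correct and follows essentially the same route as the paper: finiteness of the chain complex via \refpositionprim\ and \refoneiterate, torsion-freeness via \refnotorsion, and then the standard rank bookkeeping $\mfc_k = \mfh_k + \rk B_k + \rk B_{k-1}$ followed by telescoping. The only adjustment is that the alternating sums must start at the bottom of the homoclinic complex rather than at $i=0$ (the paper takes the lower limit $j \leq -3$, since the grading lives in $\{\pm 1, \pm 2, \pm 3\}$), so that the telescoping closes with no leftover boundary term at the bottom.
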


This summarizes \refhomoclinicMorse. Geometrically these inequalities relate and estimate the number of primary homoclinic points with different Maslov indices.

%%%%%%%%%%%%%%%%%%%%%%%%%%%%%%%%%%%%%%%%%%%%%%%%%%%%%%%%%%%%%%%%%%%%%%%%%%%%%%%%%%%%%%%%%%%%%%%%%%%%%
%%%%%%%%%%%  new subsection  %%%%%%%%%%%%%%%%%%%%%%%%%%%%%%%%%%%%%%%%%%%%%%%%%%%%%%%%%%%%%%%%%%%%%%%%%%

\subsection*{Acknowledgements}

The author wishes to thank Wim Vanroose for insights into numerics.

%%%%%%%%%%%%%%%%%%%%%%%%%%%%%%%%%%%%%%%%%%%%%%%%%%%%%%%%%%%%%%%%%%%%%%%%%%%%%%%%%%%%%%%%%%%%%%%%%%%%%
%%%%%%%%%%%  new section  %%%%%%%%%%%%%%%%%%%%%%%%%%%%%%%%%%%%%%%%%%%%%%%%%%%%%%%%%%%%%%%%%%%%%%%%%%
%%%%%%%%%%%%%%%%%%%%%%%%%%%%%%%%%%%%%%%%%%%%%%%%%%%%%%%%%%%%%%%%%%%%%%%%%%%%%%%%%%%%%%%%%%%%%%%%%%%

\section{Primary homoclinic Floer homology}

\label{types}
%\newcommand{\reftypes}{Section \ref{types}}

%%%%%%%%%%%%%%%%%%%%%%%%%%%%%%%%%%%%%%%%%%%%%%%%%%%%%%%%%%%%%%%%%%%%%%%%%%%%%%%%%%%%%%%%%%%%
%%%%%%%%%%%%%%%%%%%  new subsection  %%%%%%%%%%%%%%%%%%%%%%%

There exist four types of Floer theory generated by homoclinic points as shown in Hohloch \cite{hohloch1, hohloch2}. Each of them has different flavours and properties, but for the present paper only the following one is relevant.

\vsp

Let $(M, \om)$ be a symplectic manifold and $\phi$ a symplectomorphisms with hyperbolic fixed point $x$ and transversely intersecting (un)stable manifolds $W^s:=W^s(\phi, x)$ and $W^u:=W^u(\phi, x)$.
As already mentioned above, $W^s$ and $W^u$ are `highly noncompact' (cf.\ Figure \ref{tangle}) which poses a problem for the analysis part of classical Floer theory. Fortunately there is a way to avoid this obstacle. If we restrict our studies to {\em two-dimensional} manifolds we may replace the involved analysis by combinatorics as proved by several authors (cf.\ de Silva \cite{de silva}, Felshtyn \cite{felshtyn}, Gautschi $\&$ Robbin $\&$ Salamon \cite{gautschi-robbin-salamon}). Restricting to dimension two simplifies only the trouble with the analysis -- the difficulties related to the abundance of intersection points remain unchanged.

\vsp	

Now assume $(M, \om)$ to be $\R^2$ or a closed surface of genus $g \geq 1$ with their resp.\ volume forms. This implies in particular that the (un)stable manifolds are {\em one dimensional}. 
Consider the set of homoclinic points $\mcH :=W^s \cap W^u$ where we assume the intersection to be transverse. Let $p$, $q \in \mcH$ and denote by $[p,q]_i $ the (one dimensional!) segment between $p$ and $q$ in $W^i$ for $i \in \{s,u\}$. 
The symplectomorphism $\phi$ introduces a $\Z$-action $\mcH \x \Z \to \mcH$, $(p,n) \mapsto \phi^n(p)$ on the set of homoclinic points. For transversely intersecting $W^s \cap W^u$, the sets $\mcH$ and $\mcH \slash \Z$ are {\em both infinite} as a glance at Figure \ref{tangle} shows. 
Denote by $c_p: [0,1] \to W^u \cup W^s$ a continuous curve with $c_p(0)=x=c_p(1)$ which runs through $[x,p]_u$ to $p$ and through $[p,x]_s$ back to $x$.
We define the homotopy class of $p$ via $[p]:= [c_p] \in \pi_1(M, x)$. Then $\mcH_{[x]}:=\{p \in \mcH \mid [p] =[x] \}$ is the set of {\em contractible} homoclinic points. It is invariant under the action of $\phi$.

\vsp

Hohloch \cite{hohloch1} showed that there is a (relative) {\em Maslov index} $\mu(p, q) \in \Z$ for $p$, $q \in \mcH$ (for $[p] =[q]$). 
If we assume the intersections to be perpendicular and if we flip $+90^\circ$ at $q$ and $-90^\circ$ at $p$ we can identify $\mu(p,q)$ in our two dimensional setting with twice the winding number of the unit tangent vector of a loop starting in $p$, running through $[p,q]_u$ to $q$ and through $[p,q]_s$ back to $p$. We have $\mu(p, q)= \mu(\phi^n(p),\phi^n(q))$ for $n \in \Z$.
The (relative) Maslov index yields a {\em grading} $\mu: \mcH_{[x]} \to \Z $ via $\mu(p):=\mu(p,x)$ such that for contractible homoclinic points $p$ and $q$ holds
\beqs
\mu(p,q)= \mu(p,x) +\mu(x,q)=\mu(p,x)- \mu(q,x)= \mu(p)-\mu(q).
\eeqs
$\mcH$ and $\mcH_{[x]}$ are somehow {\em `too large'} sets in order to be used as generators for a Floer chain complex. But we will find now {\em finite} subsets which can serve as generator sets for a Floer theory. 
%
%We call $p \in \mcH\setminus\{x\}$ {\em semi-primary} if $]x,p[_s\ \cap\ ]x,p[_u\ = \emptyset$. 
%
$p \in \mcH_{[x]} \setminus\{x\}$ is called {\em primary} if  
\beq
\label{primary}
]p,x[_s\ \cap \ ]p,x[_u\ \cap\ \mcH_{[x]} = \emptyset.
\eeq
The set of primary points is denoted by $\mcHpr$. These homoclinic points have the following important properties.

\begin{Lemma}[Hohloch \cite{hohloch1}, Remark 16, Lemma 17, and Remark 18]
\label{position prim}
\mbox{  }
\begin{enumerate}[(i)]
\item
$\mcHpr$ is invariant under $\phi$.
\item
Let $p$ be primary. Then all primary points lying in the intersection set of the same pair of intersecting branches as $p$ have a unique representative in $]p, \phi(p)]_s \ \cap\ ]p, \phi(p)]_u$.
\item
For a primary point $p$ holds $\mu(p)=\mu(p,x) \in \{\pm 1, \pm 2, \pm 3\}$, i.e. the Maslov index is bounded.
\item
If $W^s$ and $W^u$ intersect transversely then $\mcHprti:=\mcHpr \slash \Z$ is \textbf{finite}.
\end{enumerate}
\end{Lemma}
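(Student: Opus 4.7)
The plan is to tackle the four statements in turn, exploiting the fact that $\phi$ is a contraction on each branch of $W^s$ towards $x$ and an expansion on each branch of $W^u$ away from $x$, together with the fact that $\phi$ is $W$-orientation preserving and hence fixes each branch setwise.

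For (i), I would first note that since $\phi$ fixes each branch, it sends $]p,x[_s$ to $]\phi(p),x[_s$ and $]p,x[_u$ to $]\phi(p),x[_u$. Since the reference loop $c_{\phi(p)}$ is homotopic (rel.\ $x$) to $\phi \circ c_p$ and $[c_p]=[x]$, we obtain $[\phi(p)]=[x]$, so $\phi(\mcH_{[x]})\subseteq \mcH_{[x]}$. Applying $\phi$ to the primariness condition then gives
\[
]\phi(p),x[_s \, \cap \, ]\phi(p),x[_u \, \cap \, \mcH_{[x]} \;=\; \phi\bigl(]p,x[_s \, \cap \, ]p,x[_u \, \cap \, \mcH_{[x]}\bigr) \;=\; \emptyset,
\]
and substituting $\phi^{-1}$ for $\phi$ yields the reverse inclusion.

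For (ii), I would use that $]p,\phi(p)]_s$ is a fundamental domain for the $\phi$-action on the branch of $W^s$ containing $p$; hence for any primary $q$ on the same pair of branches as $p$ there is a unique $n\in\Z$ with $\phi^n(q)\in\,]p,\phi(p)]_s$. The core step is to upgrade this to $\phi^n(q)\in\,]p,\phi(p)]_u$ as well. I would argue by contradiction: if $\phi^n(q)$ were on the wrong side of $p$ or $\phi(p)$ along $W^u$, then the Jordan-curve properties of the loops $c_p$ and $c_{\phi^n(q)}$ in our two-dimensional setting would force one of them to contain a point of $\mcH_{[x]}$ in $]\cdot,x[_s \cap\,]\cdot,x[_u$, contradicting primariness (which is $\phi$-invariant by (i)). The delicate point is that the stable and unstable orderings along a pair of branches are rigidly coupled in dimension two, so the transverse tangle admits only one consistent way to insert $\phi^n(q)$ within the fundamental domain; this coupling is the technical heart of the argument.

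For (iii), I would use the winding-number description: after the $\pm 90^\circ$ corner flips at $p$ and $x$, $\mu(p)$ equals twice the turning number of the unit tangent of $c_p$. Primariness implies that the piecewise-smooth loop $c_p$ has no homoclinic self-intersections in $]p,x[_s \cap\,]p,x[_u$ other than the corners, so the turning contributed by each smooth piece is tightly constrained. A case distinction over the four possible branch-combinations for $p$, together with monotonicity of the rotation on each smooth piece and the corner contributions, shows that $\mu(p)$ cannot exceed $3$ in absolute value, giving the stated range. This part is essentially bookkeeping, and its length is the main burden.

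For (iv), I would combine (ii) with transversality. There are at most four pairs of branches $(W^s_\pm,W^u_\pm)$. On any pair supporting a primary point $p$, part (ii) shows that every $\Z$-orbit of primary points on that pair has a representative in the set $]p,\phi(p)]_s \cap\,]p,\phi(p)]_u$. The arc $]p,\phi(p)]_s\subset W^s$ is compact, and by transverse intersection of $W^s$ and $W^u$ its intersection with $W^u$ is finite; hence $]p,\phi(p)]_s \cap\,]p,\phi(p)]_u$ is finite. Summing over the finitely many branch pairs, $\mcHprti$ is finite. The principal obstacle in the whole proof is (ii), where the geometry of primary points inside a single fundamental domain, and the coupling between the two intrinsic orderings, must be pinned down carefully.
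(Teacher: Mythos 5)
First, note that the paper itself does not prove this lemma: it is imported verbatim from Hohloch \cite{hohloch1} (Remark 16, Lemma 17, Remark 18), so there is no in-paper argument to measure yours against. Judged on its own, your outline has the right architecture for (i), (ii) and (iv) --- $\phi$-equivariance of the primariness condition for (i), a fundamental-domain argument plus primariness for (ii), compactness plus transversality for (iv) --- and for (ii) you do not even need Jordan-curve considerations: if $\phi^n(q)\in\,]p,\phi(p)]_s$ but $\phi^n(q)<_u p$, then $\phi^n(q)\in\,]p,x[_s\,\cap\,]p,x[_u$ contradicts primariness of $p$, while $\phi^n(q)>_u\phi(p)$ forces $\phi(p)\in\,]\phi^n(q),x[_s\,\cap\,]\phi^n(q),x[_u$ and contradicts primariness of $q$; the two orderings are coupled exactly through these two applications of \eqref{primary}.

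Two steps, however, would fail as written. In (iv) you assert that the compact arc $[p,\phi(p)]_s$ meets $W^u$ in finitely many points by transversality. This is false: the paper itself records that $\mcH\slash\Z$ is infinite, and by (ii)-type reasoning every $\Z$-orbit on that branch pair has a representative in this very arc, so the arc contains infinitely many homoclinic points (transversality only makes each intersection isolated, not the set finite, since $W^u$ is noncompact and accumulates on $W^s$). What is finite is the intersection of the \emph{two} compact arcs $[p,\phi(p)]_s$ and $[p,\phi(p)]_u$ --- an infinite transverse intersection of compacta would have an accumulation point violating isolatedness --- and that is the set the lemma actually concerns; the conclusion survives but only via this corrected route. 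In (iii), the claimed mechanism (``monotonicity of the rotation on each smooth piece'') is not available: the tangent along an arc of $W^s$ or $W^u$ need not turn monotonically, and the turning of each piece separately is unbounded. The actual input is that primariness on $\R^2$ forces $]p,x[_s\,\cap\,]p,x[_u=\emptyset$, so $c_p$ is an \emph{embedded} closed curve; Hopf's Umlaufsatz then fixes its total turning at $\pm 2\pi$, and only the two corner flips of $\pm 90^\circ$ produce the finite list $\{\pm1,\pm2,\pm3\}$. Without the Umlaufsatz step your case analysis has nothing to bound.
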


We denote the equivalence class of $p\in \mcHpr$ in $ \mcHprti= \mcHpr \slash \Z$ by $\langle p \rangle$. The homotopy class and the Maslov index $\mu$ pass to the quotient via $[\langle p \rangle ] := [p]$, $\mu(\langle p \rangle, \langle q \rangle):=\mu(p,q)$ and $\mu(\langle p \rangle):= \mu(p,x)$.

\vsp

Now we sketch the construction of Floer homology for the {\em finite} set $\mcHprti$.
Consider a fixed 2-gon $D$ in $\R^2$ with convex vertices at $(-1,0)$ and $(1,0)$.
Denote its lower edge by $B_u$ and its upper edge by $B_s$.
For $p$, $q \in \mcH$ with $\mu(p)-\mu(q)=1$, we define $\mcM(p,q)$ to be the space of smooth, immersed 2-gons $v : D \to M$ which are orientation preserving and satisfy
$v(B_u) \subset W^u$, $v(B_s) \subset W^s$, 
$v(-1,0)=p$ and $v(1,0)=q$.
Denote by $G(D)$ the group of orientation preserving diffeomorphisms of $D$ which preserve the vertices and set $\mcMhat(p,q):=\mcM(p,q) \slash G(D)$.

%For $i \in \{s,u\}$, denote by $W^i_+$ and $W^i_-$ the branches of the (un)stable manifolds. Endow them with their `jump
%direction' as orientation denoted by $o(W^i_+)$ resp.\ $o(W^i_-)$. 

Endow each branch of the (un)stable manifolds with its `iteration-jump direction' as orientation and denote it by $o(${\small branch}$)$.
When working with distinct primary points $p$ and $q$ which lie in the same branch, denote said branch by $W_{pq}$.
For distinct primary points $p$, $q$ with $\mu(p,q)=1$ and $v \in \mcM(p,q) \neq \emptyset$ associate to $v(B_i)=[p,q]_i$ the orientation induced by the parametrization direction from $p$ to $q$ called $o_{pq}$.
We set
\beqs
m(p,q):= \left\{
\begin{aligned}
 1 &&& \mbox{if } \mu(p,q)=1, \ \mcM(p,q) \neq \emptyset, \ o(W_{pq})=o_{pq}, \\
 -1 &&& \mbox{if } \mu(p,q)=1, \ \mcM(p,q) \neq \emptyset, \ o(W_{pq}) \neq
o_{pq}, \\
0 &&& \mbox{otherwise}
\end{aligned}
\right.
\eeqs
which was proven to be welldefined in Section 3.2 of Hohloch \cite{hohloch1}.
For $\langle p
\rangle $, $\langle q \rangle \in \mcHprti$ set
$m(\langle p \rangle, \langle q \rangle):=\sum_{n \in \Z} m(p,\phi^n(q))$. We define the {\em Floer chain groups} and the {\em Floer boundary operator} via
\begin{gather*}
 C_k:=C_k(\phi, x; \Z):= \bigoplus_{\stackrel{\langle p \rangle \in \mcHprti}{\mu(\langle p
 \rangle)=k}} \Z \langle p \rangle,
\qquad
\del \langle p \rangle := \sum_{\stackrel{\langle q \rangle \in
    \mcHprti}{\mu(\langle q \rangle)=\mu(\langle p \rangle) -1}} m(\langle p
\rangle , \langle q \rangle) \langle q \rangle
\end{gather*}
on a generator $\langle p \rangle$ and extend $\del$ by linearity.
All groups have finite rank and, due to \refpositionprim, $C_k=0$ for $k \notin \{\pm 1, \pm 2, \pm 3\}$.

\begin{Theorem}[Hohloch \cite{hohloch1}, Theorem 23]
\label{phfh}
\mbox{ \ }
\begin{enumerate}[(i)]
 \item 
$\del \circ \del=0$, i.e. $(C_*, \del_*)$ is a chain complex and 
\beqs
H_k:= H_k( \phi, x; \Z):= \frac{\ker \del_k}{\Img \del_{k+1}}
\eeqs
is called primary homoclinic Floer homology of $\phi$ in $x$.
\item
We have $H_k=0$ for $k \neq \pm 1, \pm 2, \pm 3$.
\end{enumerate}
\end{Theorem}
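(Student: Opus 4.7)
I would handle (ii) first since it is essentially immediate. By \refpositionprim(iii) every $p \in \mcHpr$ satisfies $\mu(p) \in \{\pm 1, \pm 2, \pm 3\}$, and since $\mu$ is $\phi$-invariant it descends to the grading on $\mcHprti$. Hence $C_k = 0$ for all $k \notin \{\pm 1, \pm 2, \pm 3\}$, and once (i) is established this forces $H_k = 0$ in the same range.

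For (i), the task is to verify $\del\circ\del=0$. Expanding on a generator yields
\begin{equation*}
\del\del\langle p\rangle \;=\; \sum_{\langle r\rangle} \Bigl( \sum_{\langle q\rangle} m(\langle p\rangle,\langle q\rangle)\, m(\langle q\rangle,\langle r\rangle) \Bigr) \langle r\rangle ,
\end{equation*}
so I need to show the inner sum vanishes for every $\langle r\rangle$ with $\mu(\langle p\rangle)-\mu(\langle r\rangle)=2$. By \refpositionprim(iv) the sum over $\langle q\rangle$ is finite, and Theorem A (\refoneiterate, \refplace) ensures that only finitely many iterates $\phi^n(q)$ and $\phi^k(r)$ are joined to chosen lifts of $p$ by immersions, so this is a genuinely finite count.

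My approach is the standard $\del^2=0$ cancellation argument, transposed to the two-dimensional combinatorial setting of de Silva \cite{de silva}, Felshtyn \cite{felshtyn}, and Gautschi-Robbin-Salamon \cite{gautschi-robbin-salamon}. I would construct a fixed-point-free, sign-reversing involution on the set of broken bigons $(v_1, v_2)$ with $v_1 \in \mcM(p,q)$ and $v_2 \in \mcM(q, r')$ for some $q \in \mcHpr$ and $r'$ in the $\phi$-orbit of $r$. Geometrically, gluing $v_1$ and $v_2$ along $q$ produces an immersed planar region bounded by a $W^u$-arc and a $W^s$-arc from $p$ to $r'$, with an interior cusp at $q$. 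The position bounds of Theorem A confine this region to a compact combinatorial window near $p$. Inside that window the cusp at $q$ admits exactly one alternative resolution: an adjacent intersection point $q'$ along the same pair of branches at which the region decomposes into a different pair of bigons with the same endpoints and the same underlying shape. A direct inspection of the branch-orientation comparison defining $m$ shows the two resolutions toggle exactly one sign, so their contributions cancel in pairs and the inner sum above is zero.

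\textbf{The main obstacle} is the sign bookkeeping. One has to verify, across every geometric subcase -- which of $W^s$ or $W^u$ carries the interior cusp, the relative orientations of the four arcs meeting at $q$, and the effect of replacing $r$ by $\phi^n(r)$ -- that the proposed involution is both well-defined on the set of contributing broken bigons and flips exactly one of the factors $m(\langle p\rangle,\langle q\rangle)$ or $m(\langle q\rangle,\langle r\rangle)$. The sharpness of Theorem A is indispensable here: \refoneiterate\ pins down the signs attached to iterated immersions, while \refplace\ bounds the spread of endpoints, together ruling out the possibility that the partner configuration lies outside the combinatorial window in which the involution is constructed.
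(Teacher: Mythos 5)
This statement is not proved in the present paper at all: it is imported verbatim from Hohloch \cite{hohloch1} (Theorem 23 there), and the text here only describes the method of that proof --- a breaking-and-gluing argument resting on the classification of $\mcMhat(p,q)$ and of immersions of relative Maslov index $2$, together with dynamical input such as Palis' $\lam$-Lemma. Your outline has the right general shape (part (ii) via $C_k=0$ from \refpositionprim(iii) is exactly right, and the sign-reversing pairing of broken bigons is indeed the mechanism behind $\del\circ\del=0$), but the entire mathematical content of part (i) sits in the step you defer as ``the main obstacle'': that every glued index-$2$ configuration admits \emph{exactly one} alternative breaking, and that the two breakings contribute opposite products of signs. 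Asserting ``a direct inspection shows the two resolutions toggle exactly one sign'' is not a proof; in \cite{hohloch1} this is where the classification of index-$2$ immersions and the $\lam$-Lemma are actually used, and without carrying out that case analysis the argument is an outline, not a proof.

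There is also a misplaced dependency. You declare Theorem A (\refoneiterate\ and \refplace) ``indispensable'' for $\del\circ\del=0$. But Theorem \ref{phfh} logically precedes Theorem A: it was proved in \cite{hohloch1} using only the \emph{existence} of the bound $L$ in \refrefineclassif(2) (Proposition 26 and Lemma 27 of \cite{hohloch1}), together with \refpositionprim(iv); that is all one needs for the sums defining $\del$ and $\del\circ\del$ to be finite and for the relevant moduli spaces to be confined to compact windows. The sharp value $L=1$ and the sign relations of \refoneiterate\ are the \emph{new} contributions of the present paper, established afterwards and used for the complexity bounds, torsion-freeness (\refnotorsion) and the Morse inequalities --- not for the chain-complex property. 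Since \refoneiterate\ does not itself invoke \refphfh, your route is not circular, but it attributes to the new sharp bound a role that the coarse bound of \refrefineclassif\ already fills, and it would leave Theorem \ref{phfh} unproved in the generality in which \cite{hohloch1} established it (i.e.\ before any sharp iteration estimate is available).
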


The proofs of the welldefinedness of $\del$ and of $\del \circ \del=0$ involve the so-called breaking and gluing procedure which mainly relies on the classification of $\mcMhat(p,q)$ and of immersions of relative Maslov index 2. Certain parts of the proofs are of combinatorial nature whereas other parts make use of the iteration behaviour of $W^s \cap W^u$ and use classical dynamical results like Palis' $\lam$-Lemma \cite{palis}.

\vsp

Since $\mcHprti$ is finite and since also the sum in the definition of $\del$ is in fact finite, we conclude:

\begin{Remark}
 \label{compactsegment}
Primary Floer homology is completely determined by a finite number of primary homoclinic points located in (possibly large) compact segments of the (un)stable manifolds centered around the fixed point.
\end{Remark}

One aim of this paper is to determine the size of these compact segments.

%%%%%%%%%%%%%%%%%%%%%%%%%%%%%%%%%%%%%%%%%%%%%%%%%%%%%%%%%%%%%%%%%%%%%%%%%%%%%%%%%%%%%%%%%%%%%%%%%%%%
%%%%%%%%%%%%%%%%%%% new section  %%%%%%%%%%%%%%%%%%%%%%%%%%%%%%%%%%%%%%%%%%%%%%%%%%%%%%%%%%%%%%%%
%%%%%%%%%%%%%%%%%%%%%%%%%%%%%%%%%%%%%%%%%%%%%%%%%%%%%%%%%%%%%%%%%%%%%%%%%%%%%%%%%%%%%%%%%%%%%%%%

\section{Computational complexity}

\label{sectionmorse}
%\newcommand{\refsectionmorse}{Section \ref{sectionmorse}}

%%%%%%%%%%%%%%%%%%%%%%%%%%%%%%%%%%%%%%%%%%%%%%%%%%%%%%%%%%%%%%%%%%%%%%%%%%%%%%%%%%%%%%%%%%%%%%%%%%
%%%%%%%%%%%%%%%%%%%%%%%  subsection

\subsection{Generators and boundary operator}

For the computational complexity of primary homoclinic Floer homology, we have two main steps to analyse:
\begin{enumerate}[(i)]
 \item 
 Finding of the generators, i.e.\ the primary points.
 \item
 Computation of the boundary operator, i.e.\ finding the connecting immersions.
\end{enumerate}
There are two different aspects to pursue:
\begin{enumerate}[(a)]
 \item 
 {\em Theoretical knowledge:} Existence and welldefinedness of generators and immersions (proved in the previous work \cite{hohloch1}); enhancement and exact upper bounds of the `finding algorithm' for these generators and immersions (will be done in this section).
 \item
 {\em Numerical realization:} Actual computation of some examples by numerical methods. We aim at employing Wim Vanroose's numerical methods based on Newton-Krylov solvers (cf.\ for instance Schl\"omer $\&$ Avitabile $\&$ Vanroose \cite{schloemer-avitabile-vanroose}). This is an ongoing project with Wim Vanroose and will be the content of a future work.
\end{enumerate}

For symplectomorphisms on $\R^2$ with compact support, Pixton's work \cite[Theorem C]{pixton} assures the generic existence of homoclinic points for hyperbolic fixed points. On surfaces with genus, Oliveira \cite{oliveira1}, \cite{oliveira2} proved generic existence under certain natural conditions. Thus we are not talking about the empty set in the following statement.

\begin{Lemma}
\label{firstIntersection}
 The first intersection point found by 
 \begin{quote}
 \begin{enumerate}[i)]
  \item 
  starting at the hyperbolic fixed point and
  \item
  tracing simultaneously a branch of $W^u$ and a branch of $W^s$
 \end{enumerate}
 \end{quote}
is a primary homoclinic point.
\end{Lemma}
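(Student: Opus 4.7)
The plan is to verify both defining conditions of primary for $p$: that $]p,x[_s \cap\ ]p,x[_u \cap \mcH_{[x]} = \emptyset$, and that $p \in \mcH_{[x]}$. Both should follow from the minimality encoded in ``first intersection found''.

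First I would establish the vacuous version of the intersection condition. Writing $B_u$ and $B_s$ for the branches being traced from $x$, the arcs $[x,p]_u \subset B_u$ and $[x,p]_s \subset B_s$ share only their endpoints by definition of $p$: any common interior point would be an intersection of the two branches encountered strictly before $p$ during the simultaneous tracing, contradicting minimality. Hence $]p,x[_u \cap\ ]p,x[_s = \emptyset$, which gives the required condition at once, no matter what the set $\mcH_{[x]}$ looks like.

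Second, for contractibility, I would use that the two arcs $[x,p]_u$ and $[x,p]_s$ together form a simple closed curve $\gamma$, and that the loop $c_p$ defining $[p] \in \pi_1(M,x)$ is a reparametrisation of $\gamma$. On $M = \R^2$ the Jordan curve theorem produces an embedded disk bounded by $\gamma$, so $[p] = [x]$ and $p \in \mcH_{[x]}$. On a closed surface of genus $g \geq 1$, I would lift $B_u$ and $B_s$ based at a chosen lift $\tilde x$ of $x$ to injectively immersed arcs $\widetilde B_u$, $\widetilde B_s$ in the universal cover $\widetilde M$, trace simultaneously upstairs, and argue that the first intersection there is a joint lift $\tilde p$ of $p$; the loop $\gamma$ then lifts to a closed loop in the simply connected $\widetilde M$ and is therefore null-homotopic in $M$.

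The main technical obstacle I anticipate is precisely the claim that ``first intersection'' commutes with the covering projection in the higher-genus case: a priori, the lifted arcs $\widetilde B_u$ and $\widetilde B_s$ could terminate at distinct preimages of $p$, in which case the lift of $\gamma$ fails to be a loop. Ruling this out requires exploiting that both branches are injectively immersed and that the simultaneous tracings upstairs and downstairs are intertwined by the covering map in a parametrisation-preserving way. Once this is settled, the lemma follows by a direct unpacking of the definition of primary.
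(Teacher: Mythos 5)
Your first paragraph is, essentially word for word, the paper's entire proof: the minimality of $p$ under simultaneous tracing gives $]p,x[_s\ \cap\ ]p,x[_u\ = \emptyset$, hence a fortiori $]p,x[_s\ \cap\ ]p,x[_u\ \cap\ \mcH_{[x]} = \emptyset$, and that is all the paper checks. The contractibility question you raise in the remaining paragraphs is not addressed in the paper at all; its convention (made explicit in the proof of \refnotorsion, and implicit in \refsectionmorse\ where $\phi\in\Symp(\R^2)$) is to work on $\R^2$ or on the universal cover of the surface, where $\pi_1$ is trivial, $\mcH=\mcH_{[x]}$, and there is nothing to verify. Be aware that the obstacle you flag in the genus $g\geq 1$ case is not a removable technicality: the two lifted arcs can indeed terminate at distinct preimages of $p$, namely exactly when the simple closed curve $[x,p]_u\cup[p,x]_s$ is essential (simple closed curves on surfaces of positive genus need not bound), and in that situation $p\notin\mcH_{[x]}$, so the lemma read verbatim on the closed surface would fail. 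The correct repair is therefore not to prove that ``first intersection'' commutes with the covering projection --- it does not in general --- but to run the tracing on the universal cover from the start, i.e.\ to read the lemma as a statement about the lifted branches; with that reading your argument collapses to the paper's one-line proof, and your $\R^2$ case (where the Jordan curve theorem is not even needed, since $\pi_1(\R^2)$ is already trivial) covers everything.
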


\begin{proof}
 Denote by $p$ the first intersection point found by starting at the hyperbolic fixed point $x$ and tracing simultaneously a branch of $W^u$ and a branch of $W^s$. Then by definition $]p,x[_s\ \cap\ ]p, x[_u = \emptyset$ which implies $p$ primary.
\end{proof}

In particular symplectomorphisms that are time-1 maps coming from a perturbation of an automomous system with a homoclinic orbit do have homoclinic points (see for instance the Melnikov method, cf.\ Guckenheimer $\&$ Holmes \cite[Chapter 4]{guckenheimer-holmes}).

Before we have a look at the positioning of generators within the branches note that symplectic diffeomorphisms are either $W$-orientation preserving or swap the stable branches as well as the unstable branches. If $\phi$ is a symplectomorphism then $\phi^2:= \phi \circ \phi$ is always $W$-orientation preserving.

\begin{Remark}[Hohloch \cite{hohloch1}, Remark 16 and Remark 18]
 Let $\phi$ be a symplectomorphism with hyperbolic fixed point $x$ and $p \in \mcH\setminus \{x\}$. Let $\phi$ be $W$-orientation preserving and let $W^s_p$ be the branch of $W^s$ containing $p$; analogously define $W^u_p$. Then $[p, \phi(p)[_s \ \cap\ [p, \phi(p)[_u \ \cap\ \mcHpr$ is a representative system of all equivalence classes $\langle q\rangle \in \mcHprti$ with $q \in W^s_p \cap W^u_p$. 
\end{Remark}

Now let us analyse the boundary operator. Its welldefinedness is based on the following result.

\begin{Proposition}[Hohloch \cite{hohloch1}, Proposition 26 and Lemma 27]
 \label{refineclassif}
Let $\phi \in \Symp(\R^2)$ with $x \in \Fix(\phi)$ hyperbolic. 
 \begin{enumerate}[1)]
  \item
  Let $p$, $q \in \mcHpr$ with $\mu(p,q)=1$ and $\mcM(p,q) \neq \emptyset$.
  Then the immersions in $\mcM(p,q)$ are in fact embeddings. In particular $]p,q[_s\ \cap\ ]p,q[_u\ = \emptyset$.
  \item
  Let $p$, $q \in \mcH_{[x]}$. Then there is $L \in \N_0$ such that for $n \in \Z$ with $\abs{n}>L$ we have $]p, \phi^n(q)[_s\ \cap\ ]p, \phi^n(q)[_u\ \neq \emptyset$. In particular, if the space $\mcM(p,\phi^n(q))$ is welldefined, then it is empty.
 \end{enumerate}
\end{Proposition}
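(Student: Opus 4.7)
My plan is to treat part (1) by showing the open boundary arcs do not meet and then using a topology-of-surfaces argument to promote the immersion to an embedding; part (2) is then a direct application of the $\lam$-lemma combined with the contrapositive of part (1).

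For part (1), I would argue by contradiction. Suppose some $r \in\ ]p,q[_s\ \cap\ ]p,q[_u$ exists. Then $r$ is another transverse homoclinic point, and additivity of the relative Maslov index gives $\mu(p,q)=\mu(p,r)+\mu(r,q)$. Any $v \in \mcM(p,q)$ can be split along the preimages of $r$ on $B_s$ and $B_u$ to produce two orientation-preserving immersions of 2-gons with corners at $\{p,r\}$ and $\{r,q\}$ respectively. In the 2-dimensional combinatorial framework of de Silva \cite{de silva}, Felshtyn \cite{felshtyn} and Gautschi--Robbin--Salamon \cite{gautschi-robbin-salamon}, any orientation-preserving immersed 2-gon between distinct transverse homoclinic points carries relative Maslov index exactly $+1$. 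This forces $\mu(p,q)\geq 2$, contradicting $\mu(p,q)=1$, so $]p,q[_s\ \cap\ ]p,q[_u\ = \emptyset$. With the open arcs disjoint, $\del v(D)$ is an embedded Jordan curve on the surface $M$; since $v$ is an orientation-preserving immersion of a disk with embedded boundary into an oriented surface, a standard degree argument (the map has well-defined degree on each component of $M\setminus \del v(D)$, and this degree equals $1$ on exactly one side by orientation) shows that $v$ is an embedding onto one of the two complementary regions of $\del v(D)$.

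For part (2), the plan is to apply Palis' $\lam$-lemma \cite{palis} to force an interior intersection of $[p, \phi^n(q)]_s$ and $[p, \phi^n(q)]_u$ for $|n|$ large, from which emptiness of $\mcM(p, \phi^n(q))$ follows by the contrapositive of part (1). Consider $n \to +\infty$: since $q \in W^s$, the point $\phi^n(q)$ converges to $x$ along $W^s$ while escaping to infinity along $W^u$. Consequently, $[p, \phi^n(q)]_s$ eventually exhausts almost the full branch of $W^s$ from $p$ to $x$, while $[p, \phi^n(q)]_u$ stretches arbitrarily far along the branch of $W^u$ through $p$. The $\lam$-lemma guarantees that iterates of a small arc meeting $W^s$ transversely accumulate $C^1$ on the local unstable manifold, so any sufficiently small $W^s$-transversal arc near $x$ is crossed transversely by $W^u$ arbitrarily often. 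For $|n|$ large this transversal is contained in $[p, \phi^n(q)]_s$, and at least one of its crossings with $W^u$ lies inside $[p, \phi^n(q)]_u$, producing the required point in $]p, \phi^n(q)[_s\ \cap\ ]p, \phi^n(q)[_u$. The case $n \to -\infty$ is symmetric after exchanging the roles of $W^s$ and $W^u$. The ``in particular'' assertion is immediate: any $v \in \mcM(p, \phi^n(q))$ would have to be an embedding by part (1), contradicting the interior intersection just produced.

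The principal obstacle I expect is the rigorous book-keeping behind the ``each sub-2-gon carries Maslov index $+1$'' step, because it requires tracking the oriented transverse intersection data of $W^s$ and $W^u$ at the three corners $p,r,q$ and checking that the orientations induced on the sub-2-gons from splitting $v$ are coherent with those of the ambient immersion; this is where the bulk of the argument will live. A secondary difficulty in part (2) is ensuring that the intersection point produced via the $\lam$-lemma lies in the open interiors of \emph{both} segments simultaneously, rather than merely somewhere on the two manifolds globally, which will require a controlled choice of the transversal arc in terms of the position of $\phi^n(q)$ relative to $x$.
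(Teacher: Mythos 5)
First, note that the paper does not actually prove this Proposition: it is imported verbatim from \cite{hohloch1} (Proposition 26 and Lemma 27), and the present text only remarks that the original arguments rest on the classification of $\mcMhat(p,q)$ and on Palis' $\lam$-lemma. So your proposal must be judged on its own merits, and for part 1) it has a genuine gap exactly at the step you yourself flag as the ``principal obstacle''. Your reduction in the other direction is fine: once $]p,q[_s\ \cap\ ]p,q[_u=\emptyset$ is known, $v|_{\del D}$ is injective, the image of $\del D$ is a Jordan curve in $\R^2$, and the degree argument does force $v$ to be an embedding. The problem is the claimed splitting. If $r\in\ ]p,q[_s\ \cap\ ]p,q[_u$ and $t_s\in B_s$, $t_u\in B_u$ are its boundary preimages, cutting $D$ along an arc from $t_s$ to $t_u$ produces two pieces each with \emph{three} corners, and there is no reason the cutting arc is mapped into $W^s\cup W^u$; so you do not obtain two immersed 2-gons from $p$ to $r$ and from $r$ to $q$. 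Note also that bare additivity $\mu(p,q)=\mu(p,r)+\mu(r,q)$ yields nothing, since relative Maslov indices of arbitrary pairs of homoclinic points can be zero or negative (primary points already realize all values in $\{\pm1,\pm2,\pm3\}$); all the force of your argument must come from actually producing index-positive immersed 2-gons between $p,r$ and $r,q$. The correct decomposition goes through an analysis of the properly embedded arcs of $v^{-1}(W^u)$ (or $v^{-1}(W^s)$) inside $D$ issuing from $t_s$, which is precisely the combinatorial classification carried out in \cite{hohloch1}; without it, part 1) is not proven.

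For part 2) your strategy (the $\lam$-lemma) is the one the paper attributes to \cite{hohloch1}, but the step you defer is again the essential one: the crossing produced by $C^1$-accumulation on $W^u_{loc}$ occurs at a point of $W^s$ whose distance to $x$ must be compared with the distance of $\phi^n(q)$ to $x$ along $W^s$, since $]p,\phi^n(q)[_s$ stops at $\phi^n(q)$; both quantities tend to $0$, so landing inside \emph{both} open segments is not automatic. There is a more elementary route that avoids this entirely: after passing to $\phi^2$ if $\phi$ is not $W$-orientation preserving, $\phi$ moves points on each branch of $W^s$ strictly monotonically towards $x$ and on each branch of $W^u$ strictly monotonically away from $x$; a short case distinction on whether $p$ and $q$ lie on the same branches then exhibits, for all sufficiently large $n$, an explicit point of $]p,\phi^n(q)[_s\ \cap\ ]p,\phi^n(q)[_u$ among $x$, $\phi(p)$, or an intermediate iterate $\phi^m(q)$ with $m_0\leq m<n$ (and symmetrically for $n\to-\infty$). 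This gives the threshold $L$ directly and reduces the ``in particular'' clause to part 1), as you say.
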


In order to get an upper bound on the search depth of our future algorithm we have to determine the integer $L$ in the second item of \refrefineclassif. Therefore we need some additional notation:
Assume $\phi$ to be $W$-orientation preserving. On each branch of an (un)stable manifold, we introduce an {\em ordering via the `jump direction'} as follows: 
\begin{itemize}
\item 
Let $p$, $q \in \mcH$ lie in the same branch of $W^s$. We write $p<_s q$ (or $q>_s p$) if $[x, p]_s \supset [x, q]_s$. 
\item
Let $p$, $q \in \mcH$ lie in the same branch of $W^u$. We write $p<_u q$ (or $q>_u p$) if $[x, p]_u \subset [x, q]_u$. 
\end{itemize}
The segment $]p,  \infty[_u$ stands for all points $q >_u p$ and $]p,  - \infty[_s$ stands for all points $q <_s p$.

\begin{Theorem}
 \label{oneiterate}
 Let $\phi \in \Symp(\R^2)$ be $W$-orientation preserving with $x \in \Fix(\phi)$ hyperbolic. Let $p$, $q \in \mcHpr$, $\mu(p,q)=1$ and $\mcM(p,q) \neq \emptyset$.
 Then 
 \begin{enumerate}[1)]
  \item 
$\mcM(p, \phi^n(q)) = \emptyset$ for $n \notin\{ -1, 0, 1\}$.
  \item
 If $\mcM(p, \phi(q))\neq \emptyset$ then  $m(p, q) =-m(p, \phi(q))$ and $\mcM(p, \phi^{-1}(q)) = \emptyset$.
  \item
  If $\mcM(p, \phi^{-1}(q)) \neq \emptyset$ then $m(p, q) =-m(p, \phi^{-1}(q))$ and $\mcM(p, \phi(q)) = \emptyset$.
 \end{enumerate} 
\end{Theorem}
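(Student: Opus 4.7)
The plan is to pin down where $q$ can sit relative to the iterates $p_k := \phi^k(p)$ on the common stable and unstable branches containing $p$ and $q$, to read off the vanishing of $\mcM(p, \phi^n(q))$ for $|n| \geq 2$ directly from Proposition \ref{refineclassif}(1), and finally to compare orientations for the sign relation.

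First I would invoke Lemma \ref{position prim}(ii): since $q$ is primary and lies in the same pair of branches as $p$, its orbit class has a unique representative in $]p, p_1]_s \cap ]p, p_1]_u$, and monotonicity of $\phi$ on each branch then forces $q \in ]p_k, p_{k+1}]_s \cap ]p_k, p_{k+1}]_u$ for one common integer $k$. The hypothesis $\mcM(p, q) \neq \emptyset$, combined with Proposition \ref{refineclassif}(1), gives $]p, q[_s \cap ]p, q[_u = \emptyset$; hence $p_1$ cannot lie in both open segments, which rules out $k \geq 1$, and $p_{-1}$ cannot lie in both, which rules out $k \leq -2$. Only Case I, with $k = 0$ and hence $p <_s q <_s p_1$ and $p <_u q <_u p_1$, or Case II, with $k = -1$ and hence $p_{-1} <_s q <_s p$ and $p_{-1} <_u q <_u p$, survives.

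In Case I, iterating these strict inclusions yields the interlaced chain $\ldots <_s p_{-1} <_s q_{-1} <_s p_0 <_s q_0 <_s p_1 <_s q_1 <_s \ldots$ on $W^s$ (writing $q_k := \phi^k(q)$), together with the analogous chain on $W^u$. From this chain I would read off that for every $n \geq 1$ the point $q_0$ lies in $]p, q_n[_s \cap ]p, q_n[_u$, and for every $n \leq -2$ the point $p_{-1}$ lies in $]q_n, p[_s \cap ]q_n, p[_u$. Proposition \ref{refineclassif}(1) then forces $\mcM(p, q_n) = \emptyset$ for every $n \geq 1$ and every $n \leq -2$, proving item 1 in Case I and rendering item 2 vacuous; only $\phi^{-1}(q)$ can give a second non-empty moduli space. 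Case II is handled by the fully symmetric argument under $\phi \leftrightarrow \phi^{-1}$, rendering item 3 vacuous and item 2 substantive.

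For the sign assertions I would compare the parametrization orientation $o_{p, \cdot}$ of each edge with the jump-direction orientation of the corresponding branch. In Case I, $p <_s q$ and $p <_u q$ say that $o_{pq}$ agrees with $o(W_{pq})$ on both branches, giving $m(p, q) = +1$; the only surviving partner, $\phi^{-1}(q)$, lies in $]p_{-1}, p[_s \cap ]p_{-1}, p[_u$, which places $(p, \phi^{-1}(q))$ into Case II and forces $m(p, \phi^{-1}(q)) = -1$, hence the sign relation of item 3. Case II yields item 2 by the same calculation with roles exchanged. The main technical hurdle I anticipate is the cell-matching step: the whole argument rests on the fact that the iteration-cell index of $q$ must coincide on $W^s$ and on $W^u$, which in turn depends on a careful application of Lemma \ref{position prim}(ii) together with $W$-orientation preservation, since a priori the two orderings could drift out of sync; once this is secured, the remaining work reduces to reading off the interlaced chains and book-keeping signs.
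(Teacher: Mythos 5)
Your strategy is a genuinely more streamlined route than the paper's (the paper runs an exhaustive case analysis over where $x$ sits, the orderings of $p,q$, and where $\phi^n(q)$ can land), and the core mechanism you propose --- locate $q$ in a single ``iteration cell'' $]\phi^k(p),\phi^{k+1}(p)]_s\ \cap\ ]\phi^k(p),\phi^{k+1}(p)]_u$ via Lemma \ref{position prim}(ii), rule out $k\neq 0,-1$ by \refrefineclassif, and then read everything off the interlaced chains --- is sound \emph{in the situation where it applies}. The cell-matching step you flag as the main hurdle does indeed follow from Lemma \ref{position prim}(ii) together with $W$-orientation preservation.

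The genuine gap is that your entire argument presupposes that $p$ and $q$ lie in the \emph{same pair of branches} of $W^s$ and $W^u$. Lemma \ref{position prim}(ii) is only stated for primary points in the same pair of intersecting branches as $p$, and the orderings $<_s$, $<_u$ are only defined for points on a common branch. This assumption is equivalent to $x\notin\ ]p,q[_s$ and $x\notin\ ]p,q[_u$, i.e.\ to the paper's Case (II). But $\mcM(p,q)\neq\emptyset$ does \emph{not} force this: the configurations with $x\in\ ]p,q[_u$ but $x\notin\ ]p,q[_s$ (and vice versa) genuinely occur with nonempty moduli spaces --- these are the paper's Cases (III) and (IV), illustrated in Figures \ref{good3}--\ref{good6}, and they feed into \refplace\ items 2) and 3). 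In those cases your interlacing chain does not exist (it would have to pass through the fixed point), the exclusion of $|n|\geq 2$ requires a different device (the paper interposes suitable iterates $\phi^k(p)$ between $q$ and $\phi^n(q)$, using the $\lambda$-lemma-type accumulation of the orbit of $p$ on the branches), and the sign comparison must be carried out on the single branch that $p$ and $q$ do share. So your proof covers one of the three nontrivial cases; to complete it you must either prove that $\mcM(p,q)\neq\emptyset$ excludes $x$ from both open segments (which is false) or supply separate arguments for Cases (III) and (IV).
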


This determines the integer $L$ in \refrefineclassif\ as $L=1$. The proof of \refoneiterate\ is technical and lengthy:

\begin{proof}[Proof of \refoneiterate]
We have to check the list of the following possibilities; there is no shorter way up to our knowledge.
\begin{itemize}
 \item
 Four {\em cases} (denoted in the following by capital Roman numbers) check where the fixed point $x$ lies w.r.t.\ $]p, q[_s$ and $]p, q[_u$.
 \item
 Two {\em subcases} (denoted in the following by Arabic numbers) distinguish wether $p<_s q$ or not and $p<_u q$ or not.
 \item
 Four {\em subsubcases} (denoted in the following by small Latin characters) check if, for some $n \in \Z\setminus\{0\}$, the iterate $\phi^n(q)$ can lie in $]p,q[_s$ and/or $]p,q[_u$ or not. 
\end{itemize}
Certain cases are somewhat symmetric, but since we also want to determine signs $m(p, q)$ we opted for writing up the proof in full detail to avoid any confusion or even potential mistakes. Now let us begin with the proof.

\vspace{2mm}

\textbf{\textit{(I) Case $ x \in\ ]p, q[_s$ and $x \in\ ]p, q[_u$:}} This implies $ x \in\ ]p, q[_s\ \cap\ ]p, q[_u$ and, by \refrefineclassif, $\mcM(p,q) =\emptyset$ $\lightning$.

\vspace{2mm}

\textbf{\textit{(II) Case $x \notin\ ]p, q[_s$ and $x \notin\ ]p, q[_u$:}}

{\em (II.1) Subcase $p<_u q$ and $p<_s q$:}

{\em (II.1.a) Subsubcase $\phi^n(q) \in\ ]p,q[_s$ and $\phi^n(q) \in\ ]p,q[_u$:} 
This implies $\phi^n(q) \in\ ]p,q[_s\ \cap\ ]p,q[_u$ implying $\mcM(p,q) = \emptyset$ according to \refrefineclassif\ $\lightning$.

{\em (II.1.b) Subsubcase $\phi^n(q) \in\ ]p,q[_s$ and $\phi^n(q) \notin\ ]p,q[_u$:} 
This implies $n < 0$ meaning $\phi^n(q) \in\ ]p,x[_u$. Moreover $\phi^n(q) \in\  ]p,q[_s\ \subset\ ]p,x[_s$ implying $p$ not primary $\lightning$.

{\em (II.1.c) Subsubcase $\phi^n(q) \notin\ ]p,q[_s$ and $\phi^n(q) \in\ ]p,q[_u$:} 
This implies $n < 0$ implying $\phi^n(q) \in\ ]p, -\infty[_s$ and $p \in\ ]\phi^n(q), x[_s\ \cap\ ]\phi^n(q), x[_u$ such that $\phi^n(q)$ is not primary $\lightning$.

{\em (II.1.d) Subsubcase $\phi^n(q) \notin\ ]p,q[_s$ and $\phi^n(q) \notin\ ]p,q[_u$:} 
If $n >0$ then $\phi^n(q) \in\ ]q, \infty[_u$ and $\phi^n(q) \in\ ]q, x[_s$ and in particular $q \in\ ] p, \phi^n(q)[_s\ \cap\ ] p, \phi^n(q)[_u$ implying $\mcM(p,\phi^n(q)) = \emptyset$ by \refrefineclassif.

If $n=-1$, the space $\mcM(p, \phi^{-1}(q))$ may be nonempty, cf.\ the positioning of $\phi^{-1}(q)=\bigstar$ in Figure \ref{good1} and the absence of additional `interfering' intersection points (otherwise $\bigstar$ may not be primary). Moreover, if $\mcM(p, \phi^{-1}(q)) \neq \emptyset$, we observe $m(p, q) =-m(p, \phi^{-1}(q))$.

If $n< -1$, then we get from the precendent case $n=-1$ that $\phi^{-1}(q) \in\ ]p, - \infty[_s$ and also $\phi^{-1}(q) \in\ ]x, p[_u$. Thus holds, for $n>-1$, in particular $\phi^{-1}(q) \in\ ]\phi^n(q), p[_s\ \cap\ ]\phi^n(q), p[_u$ such that $\mcM(p, \phi^n(q))= \emptyset$.

\begin{figure}[h]
\begin{center}

\input{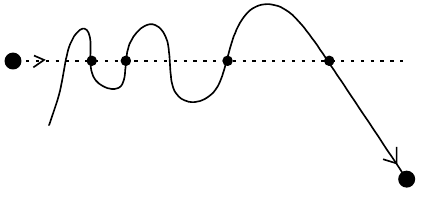_t}

\caption{In this and the following figures, we sketch the cases with either $\mcMhat(p,\phi(q)) \neq \emptyset$ or $\mcMhat(p,\phi^{-1}(q)) \neq \emptyset$. The symbol $\bigstar$ stands for $\phi(q)$ resp.\ $\phi^{-1}(q)$. The unstable manifold is drawn as dotted line and the stable manifold als black line and, for the sake of easier drawing, $x$ is split into two copies (as done in most of the pictures in Hohloch \cite{hohloch1, hohloch2}).}
\label{good1}

\end{center}
\end{figure}

{\em (II.2) Subcase $p<_u q$ and $p>_s q$:}
This implies immediately $p \in\ ] x, q[_s\ \cap\ ]x, q[_u$ such that $q$ is not primary $\lightning$.

{\em (II.3) Subcase $p>_u q$ and $p<_s q$:}
We find $q \in\ ] x, p[_s\ \cap\ ]x, p[_u$ such that $p$ is not primary $\lightning$.

{\em (II.4) Subcase $p>_u q$ and $p>_s q$: This is quite analogous to (II.1), but nevertheless:} 

{\em (II.4.a) Subsubcase $\phi^n(q) \in\ ]p,q[_s$ and $\phi^n(q) \in\ ]p,q[_u$:} 
This implies $\phi^n(q) \in\ ]p,q[_s\ \cap\ ]p,q[_u$ such that $\mcM(p,q)= \emptyset $ by \refrefineclassif\ $\lightning$.

{\em (II.4.b) Subsubcase $\phi^n(q) \in\ ]p,q[_s$ and $\phi^n(q) \notin\ ]p,q[_u$:} 
Thus $n>0$ and $\phi^n(q) \in\ ]p, \infty[_u$ such that $p \in\ ]\phi^n(q), x[_s \ \cap\ ]\phi^n(q), x[_u$ hindering $\phi^n(q)$ from being primary $\lightning$.

{\em (II.4.c) Subsubcase $\phi^n(q) \notin\ ]p,q[_s$ and $\phi^n(q) \in\ ]p,q[_u$:} 
Thus $n>0$ and $\phi^n(q) \in \ ]p,x[_s$ such that $\phi^n(q) \in \ ]p,x[_s \ \cap\ ]p,x[_u$ implying $p$ not primary $\lightning$.

{\em (II.4.d) Subsubcase $\phi^n(q) \notin\ ]p,q[_s$ and $\phi^n(q) \notin\ ]p,q[_u$:} 
For $n=1$, we may have $\mcM(p,\phi(q)) \neq \emptyset$ 
cf.\ the positioning of $\phi(q)=\bigstar$ in Figure \ref{good2} and the absence of additional `interfering' intersection points (otherwise $\bigstar$ may not be primary).
In that case, $m(p, q) =-m(p, \phi^{-1}(q))$. For $n>1$, we find $\phi(q) \in\ ]p, \phi^n(q)[_s\ \cap\ ] p, \phi^n(q)[_u$ such that $\mcM(p, \phi^n(q))= \emptyset$.
If $n<0$ then $q \in \ ]\phi^n(q), p[_s\ \cap\ ]\phi^n(q), p[_u$ and $\mcM(p, \phi^n(q))= \emptyset$.

\begin{figure}[h]
\begin{center}

\input{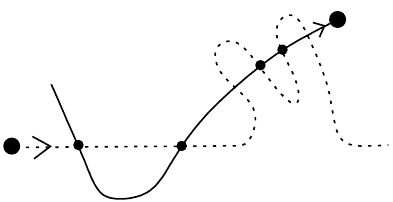_t}

\caption{}
\label{good2}

\end{center}
\end{figure}

\textbf{\textit{(III) Case $x \notin\ ]p, q[_s$ and $x \in\ ]p, q[_u$:}}
Since $\phi$ is $W$-orientation preserving, $\phi^n(q)$ lies always in the same branch as $q$. Keep this in mind in the following.

{\em (III.1) Subcase $p<_s q$:}

{\em (III.1.a) Subsubcase $\phi^n(q) \in\ ]p,q[_s$ and $\phi^n(q) \in\ ]p,q[_u$:} 
Then $\phi^n(q) \in\ ]p,q[_s\ \cap\  ]p,q[_u$ and $\mcM(p,q) = \emptyset$ $\lightning$.

{\em (III.1.b) Subsubcase $\phi^n(q) \in\ ]p,q[_s$ and $\phi^n(q) \notin\ ]p,q[_u$:} 
Since by assumption $\phi^n(q) \notin\ ]p,q[_u\ \supset \ ]x, q[_u$ we conclude $n>0$ and note $q \in \ ]\phi^n(q), x[_s\ \cap\ ]\phi^n(q), x[_u$ which prevents $\phi^n(q)$ from being primary $\lightning$.

{\em (III.1.c) Subsubcase $\phi^n(q) \notin\ ]p,q[_s$ and $\phi^n(q) \in\ ]p,q[_u$:} 
We note $\phi^n(q) \in\ ]x, q[_u\ \subset\ ]p,q[_u$ concluding $n<0$ and $\phi^n(q) \in\ ]p, -\infty[_s$. For $n=-1$ we may have $\mcM(p, \phi(q)) \neq \emptyset$ 
cf.\ the positioning of $\phi^{-1}(q)=\bigstar$ in Figure \ref{good3} and the absence of additional `interfering' intersection points (otherwise $\bigstar$ may not be primary).
Moreover, we find in this case $m(p, q) =-m(p, \phi^{-1}(q))$. If $n< -1$ then $\phi^n(q) \in\ ]\phi^{-1}(q), x[_s\ \cap\  ]\phi^{-1}(q), x[_u$ preventing $\phi^n(q)$ from being primary $\lightning$.

{\em (III.1.d) Subsubcase $\phi^n(q) \notin\ ]p,q[_s$ and $\phi^n(q) \notin\ ]p,q[_u$:} 
Since $\phi^n(q) \notin\ ]p,q[_u$ we conclude $\phi^n(q) \in\ ]q, \infty[_u$. Thus $n>0$ and we note $q \in\ ]p, \phi^n(q)[_s\ \cap\ ]p, \phi^n(q)[_u$ implying $\mcM(p, \phi^n(q))= \emptyset$.

\begin{figure}[h]
\begin{center}

\input{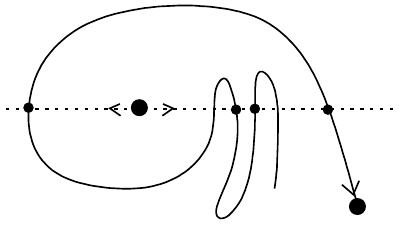_t}

\caption{}
\label{good3}

\end{center}
\end{figure}

{\em (III.2) Subcase $p>_s q$:} 

{\em (III.2.a) Subsubcase $\phi^n(q) \in\ ]p,q[_s$ and $\phi^n(q) \in\ ]p,q[_u$:} 
Then $\phi^n(q) \in\ ]p,q[_s\ \cap\ ]p,q[_u$, thus $\mcM(p,q) = \emptyset $ $ \lightning$.

{\em (III.2.b) Subsubcase $\phi^n(q) \in\ ]p,q[_s$ and $\phi^n(q) \notin\ ]p,q[_u$:} 
We conclude $n>0$ and $\phi^n (q) \in\ ]q, \infty[_u$. Thus there is $k<0$ with $\phi^k(p) \in\ ]\phi^n(q), q[_s\ \subset\ ]p,q[_s$ implying $\phi^k(p) \in\ ]p, q[_s\ \cap\ ]p,q[_u$ such that $\mcM(p,q) = \emptyset$ $\lightning$.

{\em (III.2.c) Subsubcase $\phi^n(q) \notin\ ]p,q[_s$ and $\phi^n(q) \in\ ]p,q[_u$:} 
We deduce $n<0$ and, since $\phi^n(q) \notin\ ]p,q[_s$, we have $\phi^n(q) \in\ ]q, -\infty[_s$. Thus there is $k<0$ with $\phi^k(p) \in\ ]q, \phi^n(q)[_s\ \subset\ ]p, \phi^n(q)[_s$. Moreover $\phi^k(p) \in\ ]p, \phi^n(q)[_u$ thus $\mcM(p, \phi^n(q)) = \emptyset$.

{\em (III.2.d) Subsubcase $\phi^n(q) \notin\ ]p,q[_s$ and $\phi^n(q) \notin\ ]p,q[_u$:} 
If $n>0$ then $\phi^n(q) \in\ ]q, \infty[_u$ and $\phi^n(q) \in\ ]p, x[_s$. For $n=1$ we may have $\mcM(p, \phi(q)) \neq \emptyset$,  
cf.\ the positioning of $\phi(q)=\bigstar$ in Figure \ref{good4} and the absence of additional `interfering' intersection points (otherwise $\bigstar$ may not be primary).
In that case, we observe $m(p, q) =-m(p, \phi(q))$. 
If $n>1$ then $\phi(q) \in\ ]q, \phi^n(q)[_u\ \subset\ ]p,\phi^n(q)[_u$. Moreover $\phi(q) \in\ ]p, \phi^n(q)[_s$ such that $\mcM(p, \phi^n(q))= \emptyset$.
If $n<0$ then $\phi^n(q) \in\ ]q, x[_u$ and $\phi^n(q) \in\ ]q, - \infty[_s$. There exists $k<0$ with $\phi^k(p) \in\ ]q, \phi^n(q)[_s\ \subset\ ]p, \phi^n(q)[_s$. Moreover $\phi^k(p) \in\ ]p, x[_u\ \subset\ ]p, \phi^n(q)[_u$ thus $\mcM(p, \phi^n(q))= \emptyset$.

\begin{figure}[h]
\begin{center}

\input{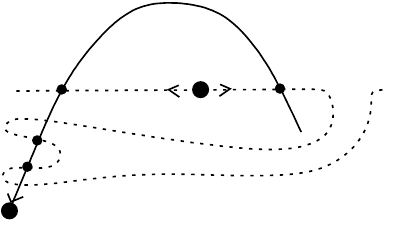_t}

\caption{}
\label{good4}

\end{center}
\end{figure}

\textit{\textbf{(IV) Case $x \in\ ]p, q[_s$ and $x \notin\ ]p, q[_u$:} This is quite analogous to (III), but nevertheless:}

Since $\phi$ is $W$-orientation preserving, $\phi^n(q)$ lies always in the same branch as $q$. Keep this in mind in the following.

{\em (IV.1) Subcase $p<_u q$:}

{\em (IV.1.a) Subsubcase $\phi^n(q) \in\ ]p,q[_s$ and $\phi^n(q) \in\ ]p,q[_u$:} 
Then $\phi^n(q) \in\ ]p,q[_s\ \cap\ ]p,q[_u$ and $\mcM(p,q) = \emptyset$ $\lightning$.

{\em (IV.1.b) Subsubcase $\phi^n(q) \in\ ]p,q[_s$ and $\phi^n(q) \notin\ ]p,q[_u$:} 
We conclude $n>0$ and $\phi^n(q) \in\ ]x,q[_s$. There is $k>0$ such that $\phi^k(p) \in\ ]q, \phi^n(q)[_u\ \subset\ ]p, \phi^n(q)[_u$. Moreover $\phi^k(p) \in\ ]p, x[_s\ \subset\ ]p, \phi^n(q)[_s$ implying $\mcM(p, \phi^n(q)) = \emptyset$.

{\em (IV.1.c) Subsubcase $\phi^n(q) \notin\ ]p,q[_s$ and $\phi^n(q) \in\ ]p,q[_u$:} 
We conclude $n<0$ and $\phi^n(q)\in\ ]q, -\infty[_s$. There is $k>0$ with $\phi^k(p) \in\ ]\phi^n(q), q[_u\ \subset\ ]p,q[_u$. Moreover $\phi^k(p) \in\ ]p,x[_s\ \subset\ ]p,q[_s$ implying $\mcM(p,q) = \emptyset $ $\lightning$.

{\em (IV.1.d) Subsubcase $\phi^n(q) \notin\ ]p,q[_s$ and $\phi^n(q) \notin\ ]p,q[_u$:} 
We conclude $n<0$ and note $\phi^n(q) \in\ ]q, -\infty[_s$ and $\phi^n(q) \in\ ] x, p[_u$. For $n=-1$, we may have $\mcM(p, \phi^{-1}(q)) \neq \emptyset$
cf.\ the positioning of $\phi^{-1}(q)=\bigstar$ in Figure \ref{good5} and the absence of additional `interfering' intersection points (otherwise $\bigstar$ may not be primary).
We observe $m(p, q) =-m(p, \phi^{-1}(q))$. If $n<-1$ then $\phi^{-1}(q) \in\ ]\phi^n(q), p[_s\ \cap\ ]\phi^n(q), p[_u$ implying $\mcM(p, \phi^n(q)) =\emptyset$.

\begin{figure}[h]
\begin{center}

\input{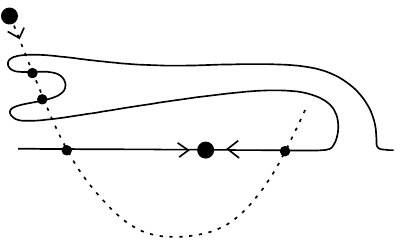_t}

\caption{}
\label{good5}

\end{center}
\end{figure}

{\em (IV.2) Subcase $p>_u q$:}

{\em (IV.2.a) Subsubcase $\phi^n(q) \in\ ]p,q[_s$ and $\phi^n(q) \in\ ]p,q[_u$:} 
Then $\phi^n(q) \in\ ]p,q[_s\ \cap\ ]p,q[_u$ and $\mcM(p,q) = \emptyset$ $\lightning$.

{\em (IV.2.b) Subsubcase $\phi^n(q) \in\ ]p,q[_s$ and $\phi^n(q) \notin\ ]p,q[_u$:} 
We conclude $n>0$ and $\phi^n(q) \in\ ]q, \infty[_u$. 
If $n=1$ we may have $\mcM(p, \phi(q)) \neq \emptyset$ 
cf.\ the positioning of $\phi(q)=\bigstar$ in Figure \ref{good6} and the absence of additional `interfering' intersection points (otherwise $\bigstar$ may not be primary).
In that case, $m(p, q) =-m(p, \phi(q))$ as sketched in Figure \ref{good6}. For $n>1$, there is $k>0$ such that $\phi^k(p) \in\ ]\phi(q), \phi^n(q)[_u\ \subset\ ]p, \phi^n(q)[_u$. Moreover $\phi^k(p) \in\ ]p,x[_s\ \subset\ ]p, \phi^n(q)[_s$ implying $\mcM(p, \phi^n(q)) = \emptyset$.

{\em (IV.2.c) Subsubcase $\phi^n(q) \notin\ ]p,q[_s$ and $\phi^n(q) \in\ ]p,q[_u$:} 
We deduce $n>0$, but then $\phi^n(q) \in\ ]x, q[_s\ \subset\ ]p,q[_s$ $\lightning$.

{\em (IV.2.d) Subsubcase $\phi^n(q) \notin\ ]p,q[_s$ and $\phi^n(q) \notin\ ]p,q[_u$:}
If $n<0$ then $q \in\ ]p, \phi^n(q)[_u$ and also  $q \in\ ]p, \phi^n(q)[_s$ implying $\mcM(p, \phi^n(q)) = \emptyset$. If $n>0$ then $\phi^n(q) \in\ ]x, q[_s\ \subset\ ]p,q[_s$ $\lightning$.
\end{proof}

\begin{figure}[h]
\begin{center}

\input{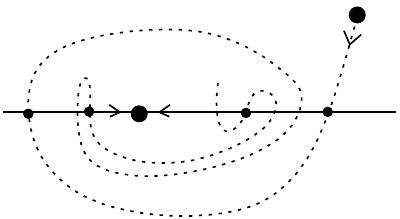_t}

\caption{}
\label{good6}

\end{center}
\end{figure}

The proof of \refoneiterate\ implies in particular:

\begin{Corollary}
 \label{place}
 Let $\phi \in \Symp(\R^2)$ be $W$-orientation preserving with $x \in \Fix(\phi)$ hyperbolic. Let $p$, $q \in \mcHpr$, $\mu(p,q)=1$ and $\mcM(p,q) \neq \emptyset$. Then
 \begin{enumerate}[1)]
  \item 
  If $x \notin\ ]p,q[_s$ and $x \notin\ ]p,q[_u$ then $q \in\ ]\phi^{-1}(p), \phi(p)[_s \ \cap\ ]\phi^{-1}(p), \phi(p)[_u$.
  \item
  If $x \notin\ ]p,q[_s$ and $x \in\ ]p,q[_u$ then $q \in \ ]\phi^{-1}(p), \phi(p)[_s$.
  \item
  If $x \in\ ]p,q[_s$ and $x \notin\ ]p,q[_u$ then $q \in \ ]\phi^{-1}(p), \phi(p)[_u$.
  \item
  If $x \in\ ]p,q[_s$ and $x \in\ ]p,q[_u$ then $\mcM(p,q) = \emptyset$, i.e.\ this case does not occur.
 \end{enumerate}
\end{Corollary}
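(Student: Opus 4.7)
The plan is to read the statement off directly from the exhaustive case analysis already performed in the proof of Theorem \ref{oneiterate}, which was organized precisely according to the four configurations of $x$ relative to $]p,q[_s$ and $]p,q[_u$ (labelled (I)--(IV) there). No new geometric machinery should be required.

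First I would dispose of case 4 as a one-liner: $x \in\ ]p,q[_s\ \cap\ ]p,q[_u$ forces $]p,q[_s\ \cap\ ]p,q[_u\ \neq \emptyset$, which by Proposition \ref{refineclassif}(1) contradicts the standing assumption $\mcM(p,q) \neq \emptyset$. Hence this configuration simply does not occur, exactly as claimed.

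For cases 1, 2, 3, I would revisit cases (II), (III), (IV) from the proof of Theorem \ref{oneiterate}, walking through the subsubcases not marked $\lightning$ and reading the position of $q$ off the accompanying pictures. Since $\phi$ is $W$-orientation preserving, $\phi(p)$ always sits on the $>$-side of $p$ and $\phi^{-1}(p)$ on the $<$-side on each branch. Inspection of Figures \ref{good1}--\ref{good2} (for case 1), \ref{good3}--\ref{good4} (for case 2), and \ref{good5}--\ref{good6} (for case 3) then shows that $q$ is wedged between $\phi^{-1}(p)$ and $\phi(p)$ on both branches, on the stable branch only, or on the unstable branch only, respectively, matching the three claims.

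The only genuinely non-routine step will be the bookkeeping needed to translate each figure, which is drawn depicting $\bigstar = \phi^{\pm 1}(q)$ relative to $p$, into the equivalent statement about $q$ relative to $\phi^{\mp 1}(p)$. Since the branch orderings $<_s$, $<_u$ are fixed and $\phi$ acts by a one-step shift in the jump direction, this is mechanical relabelling and I anticipate no substantive obstacle.
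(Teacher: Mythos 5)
Your proposal matches the paper exactly: the paper offers no separate argument for this corollary beyond the remark that it follows from the case analysis (I)--(IV) in the proof of Theorem \ref{oneiterate}, with case (I) giving item 4 via Proposition \ref{refineclassif} and the surviving subsubcases of (II), (III), (IV) giving items 1, 2, 3. Your reading-off strategy, including the relabelling from $\phi^{\pm 1}(q)$ versus $p$ to $q$ versus $\phi^{\mp 1}(p)$, is precisely what the paper intends.
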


%%%%%%%%%%%%%%%%%%%%%%%%%%%%%%%%%%%%%%%%%%%%%%%%%%%%%%%%%%%%%%%%%%%%%%%%%%%%%%%%%%%%%%%%%%%%%%%%%%
%%%%%%%%%%%%%%%%%%%%%%%  subsection

\subsection{The improved algorithm}

The calculation of primary homoclinic Floer homology can be optimized as summarized in the following algorithm. The results from the previous section enter in {\em Step 5)} when the necessary data for the boundary operator are determined.

\begin{enumerate}[1)]
 \item 
 Check all four pairs of branches of the stable and unstable manifolds for the existence of one intersection point using the method in \reffirstIntersection. If one intersection point $p$ is found it is primary according to \reffirstIntersection.
 \item
  For all primary points $p_i$, where $1 \leq i \leq 4$, found in {\em Step 1)}, determine all intersection points in $]p_i, \phi(p_i)[_s\ \cap\ ]p_i, \phi(p_i)[_u$. For transversely intersecting stable and unstable manifolds, this is a finite number $L_i$.
  \item
  For all $1 \leq i \leq 4$ and all $L_i $ intersection points of $[p_i, \phi(p_i)[_s\ \cap\ [p_i, \phi(p_i)[_u$ found in {\em Step 2)}, determine the $K_i \leq L_i$ primary ones and remember them. According to \refpositionprim, we find that way exactly one representative of each equivalence class of primary points.
   \item
  For all $1 \leq i \leq 4$ and all $K_i $ primary intersection points $p$ in $[p_i, \phi(p_i)[_s\ \cap\ [p_i, \phi(p_i)[_u$ found according to {\em Step 3)}, determine their Maslov index $\mu(p):=\mu(p,x)$. This enables us to define the chain groups of the complex.
  \item
  For all $1 \leq i \leq 4$ and all $K_i$ primary points $p$ in $[p_i, \phi(p_i)[_s\ \cap\ [p_i, \phi(p_i)[_u$ and all primary points $q$ in $]\phi^{-1}(p), \phi(p)[_s$ and $ ]\phi^{-1}(p), \phi(p)[_u$ with $\mu(q)= \mu(p)-1$, determine 
  \begin{enumerate}[(i)]
  \item
  if $]p,q[_s\ \cap\ ]p,q[_u = \emptyset$,
  \item
  and, if yes, calculate the sign $m(p,q)$.
  \end{enumerate}
  This relies on \refoneiterate\ and provides all necessary information in order to calculate the boundary operator in the next step.
  \item
  Using the already gathered information, we can now calculate
  $$\del \langle p \rangle = \sum_{\stackrel{\langle q \rangle \in
    \mcHprti}{\mu(\langle q \rangle)=\mu(\langle p \rangle) -1}} m(\langle p
  \rangle , \langle q \rangle) \langle q \rangle.
  $$
  \item
  Calculate $\ker \del$ and $\Img \del$ and $H_k:=H_k( \phi, x; \Z):= \frac{\ker \del_k}{\Img \del_{k+1}}$.
\end{enumerate}

The implementation and evaluation of this algorithm with numerical methods is an ongoing project with Wim Vanroose.

%%%%%%%%%%%%%%%%%%%%%%%%%%%%%%%%%%%%%%%%%%%%%%%%%%%%%%%%%%%%%%%%%%%%%%%%%%%%%%%%%%%%%%%%%%%%%%%%%%
%%%%%%%%%%%%%%%%%%%%%%%  new section  %%%%%%%%%%%%%%%%%%%%%%%%%%%%%%%%%%%%%%%%%%%%%%%%%%%%%%%%%%%%%%%
%%%%%%%%%%%%%%%%%%%%%%%%%%%%%%%%%%%%%%%%%%%%%%%%%%%%%%%%%%%%%%%%%%%%%%%%%%%%%%%%%%%%%%%%%%%%%%%%%%

\section{Torsion freeness and Morse inequalities}

\label{torsionMorse}

Apart from speeding up the calculation of primary homoclinic Floer homology, \refoneiterate\ has also purely algebraic applications: as we will see in this section, it implies that primary homoclinic Floer homology is torsion-free and, eventually, there are Morse type inequalities for homoclinic points.

%%%%%%%%%%%%%%%%%%%%%%%%%%%%%%%%%%%%%%%%%%%%%%%%%%%%%%%%%%%%%%%%%%%%%%%%%%%%%%%%%%%%%%%%%%%%%%%%%%
%%%%%%%%%%%%%%%%%%%%%%%  subsection

\subsection{Classical Morse inequalities}

\label{classicalMorse}

In this paragraph, we briefly sketch the approach to Morse homology via the Morse-Smale-Witten complex (cf.\ for instance Schwarz \cite{schwarz}).

\vsp

Let $N$ be a closed $m$-dimensional manifold. $f: N \to \R$ is a {\em Morse function} if, for all critical points $\Crit(f)=\{p \in N \mid Df|_p=0\}$, the Hessian $D^2f|_p$ of $f$ is nondegenerate. The {\em Morse index} $\Ind(p)$ of $p \in \Crit(f)$ is the number of negative eigenvalues of $D^2f|_p$.
The {\em $k$th integral Morse chain group} is defined by
\beqs
C^M_k(N, f; \Z) := \bigoplus_{\stackrel{p \in \Crit(f)}{\Ind(p)=k}} \Z p ,
\eeqs
i.e.\ it is the free abelian group generated by all critical points of index $k$. We abbreviate
$$c_k:= \rk C^M_k(N, f; \Z) =\abs{\{ p \in \Crit(f) \mid \Ind(p)=k \}}.$$ 
For a Morse function $f$ and a Riemannian metric $g$ on $N$, the {\em negative gradient flow} is the flow of the equation
$$ \gadot = - \grad_g f(\ga).$$
Such a pair $(f,g)$ is called {\em Morse-Smale} if the intersection of stable and unstable manifolds of critical points is always transverse. Under these conditions, the space of trajectories `joining' $p^- \in \Crit(f)$ to $p^+\in \Crit(f)$ given by
\begin{align*} 
\mcM(f,g, p^-, p^+) := \left\{ \ga: \R \to N \left| 
\begin{aligned}
& \gadot(t) = - \grad_g f(\ga(t)), \\ 
& \lim_{t \to - \infty}\ga(t)=p^-,\ \lim_{t \to  \infty}\ga(t)=p^+
\end{aligned}
\right. \right\} 
\end{align*}
has dimension $\Ind(p^-)-\Ind(p^+)$. It carries the action 
$$\R \x \mcM(f,g, p^-, p^+) \to \mcM(f,g, p^-, p^+), \quad (s,\ga) \mapsto \ga( \cdot + s).$$
Dividing by this action yields $\mcMhat(f,g, p^-, p^+):= \mcM(f,g, p^-, p^+)\slash \R$ which has dimension $\Ind(p^-)-\Ind(p^+)-1$. For critical points $p^-$ and $p^+$ with $\Ind(p^-)- \Ind(p^+)=1$, the space $\mcMhat(f,g, p^-, p^+)$ has dimension zero and has cardinality 
$\abs{\mcMhat(f,g, p^-, p^+)} < \infty$.
The trajectory spaces can actually be coherently endowed with an orientation which induces a {\em signed cardinality} 
$$\abs{\mcMhat(f,g, p^-, p^+)}_\Z \in \{\pm  \abs{\mcMhat(f,g, p^-, p^+)} \}.$$
This allows to define the boundary operator 
\begin{align*}
& \del^M_*: C^M_*(N, f; \Z) \to C^M_{*-1}(N, f; \Z), \\
& \del^M_* p^-:= \sum_{ \stackrel{p^+ \in \Crit(f)}{\Ind(p^+)=*-1}} \abs{\mcMhat(f,g, p^-, p^+)}_\Z\ p^+
\end{align*}
on the generators; it extends by linearity. It holds $\del^M_* \circ \del^M_{*+1} = 0$ such that $(C^M_*(N, f; \Z), \del^M_*)$ is a chain complex whose induced homology 
$$ H^M_*(N; \Z)= H^M_*(N, f,g ; \Z) := \ker( \del_*^M) \slash \Img( \del^M_{*+1})$$
is called {\em Morse homology}. It is in fact independent of $f$ and $g$ and isomorphic to the singular homology of $N$.

\vsp

Every finitely generated abelian group $G$ has a direct sum decomposition $G=F\oplus T$ where $F$ is a finitely generated free subgroup and $T$ is a unique torsion subgroup (that are all elements of finite order in the group $G$). The {\em rank of $G$} is denoted by $\rk G$ and is defined as the rank of $F$. The {\em torsion rank of $G$} is the minimal number of cyclic subgroups of whose direct sum $T$ is a subgroup.

\vsp

$C^M_*(N,F,G;\Z)$ is by definition a finitely generated free abelian group and $\ker( \del_*^M)$ and $\Img( \del_{*})$ are as its subgroups also finitely generated and free abelian. The Morse homology groups $H^M_*(N; \Z) = \ker( \del_*^M) \slash \Img( \del^M_{*+1})$ are as quotient groups certainly abelian, but not necessarily freely generated, i.e.\ they may have a torsion subgroup. 

\vsp

Standard examples for torsion in homology groups are the higher dimensional real projective spaces. If we work with $\Q$- or $\R$-coefficients, there is never torsion due to the universal coefficient theorem of homology.

\vsp

If we denote by $h_k:= \rk H^M_k(N;\Z)$ the rank and by $t_k$ the torsion rank of $H^M_k(N;\Z)$, then we certainly have $h_k \leq c_k$ for $0 \leq k \leq n$. A closer look leads to the so-called {\em Morse inequalities} (cf.\ also Postnikov $\&$ Rudyak \cite{postnikov-rudyak}):
\begin{align*}
 & h_k +t_k + t_{k-1}  \leq c_k  \quad \mbox{for } 0 \leq k \leq n \mbox{ with } t_{-1}:=0, \\
 & \sum_{i=0}^l (-1)^{l-i} h_i  \leq \sum_{i=0}^l (-1)^{l-i} c_i   \quad \mbox{for } 0 \leq l \leq n, \\
 & \sum_{i=0}^n (-1)^{n-i} h_i = \sum_{i=0}^n (-1)^{n-i} c_i, \\ 
 & \sum_{i=0}^n (-1)^{i} c_i = \chi(N)
\end{align*}
where $\chi(N)$ is the Euler characteristic of $N$.

%%%%%%%%%%%%%%%%%%%%%%%%%%%%%%%%%%%%%%%%%%%%%%%%%%%%%%%%%%%%%%%%%%%%%%%%%%%%%%%%%%%%%%%%%%%%%%%%%%%%%%%%%
%%%%%%%%%%%%%%%%%%%%%%  subsection

\subsection{Torsion freeness and Morse inequalities for homoclinic Floer homology}

We now will show that primary homoclinic Floer homology is torsion-free and we will present Morse type inequalities for primary homoclinic points using the framework of primary homoclinic Floer homology.

\begin{Theorem}[{\bf Torsion freeness}]
\label{notorsion}
Let $\phi$ be a $W$-orientation preserving symplectomorphism on $\R^2$ or on a closed surface of genus greater than zero.
Then the primary homoclinic Floer groups $H_*(\phi, x; \Z)$ are free, i.e.\ their torsion subgroups are trivial.
\end{Theorem}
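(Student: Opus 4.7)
The plan is to use \refoneiterate\ to constrain the structure of the boundary operator $\del$ sharply, and then deduce torsion-freeness by a linear algebra argument over $\Z$.

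First, I would expand $m(\langle p \rangle, \langle q \rangle) = \sum_{n \in \Z} m(p, \phi^n(q))$ for a chosen representative $p$ of $\langle p \rangle$. \refoneiterate\ implies that at most two summands are nonzero -- namely $m(p, q)$ and, possibly, one of $m(p, \phi(q))$ or $m(p, \phi^{-1}(q))$ -- and when two are nonzero they carry opposite signs and cancel in the sum. Hence every entry of every boundary matrix $\del_k$ lies in $\{-1, 0, 1\}$ in the natural basis indexed by $\mcHprti$.

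Second, I would upgrade this to the claim that $\del_k$ is a partial signed permutation matrix: each row and each column contains at most one nonzero $\pm 1$. The key ingredients for this upgrade are the localization from \refplace\ (the endpoint $q$ of any immersion contributing to $\del \langle p \rangle$ is confined to $]\phi^{-1}(p), \phi(p)[$ on the relevant branch of the (un)stable manifold), \refpositionprim\ (each $\Z$-orbit of primary points has a unique representative in such an iteration interval), the boundedness $\mu \in \{\pm 1, \pm 2, \pm 3\}$ that splits the complex into two short pieces $C_3 \to C_2 \to C_1$ and $C_{-1} \to C_{-2} \to C_{-3}$, and a symmetric version of \refoneiterate\ for a varying start point $p$, obtained by exchanging the roles of $p$ and $q$ or equivalently by replacing $\phi$ with $\phi^{-1}$. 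Once $\del_k$ is a partial signed permutation, its Smith normal form has only $0$'s and $1$'s on the diagonal. Therefore $\Img \del_k$ is a direct summand of $C_{k-1}$ and a pure subgroup of $\ker \del_{k-1}$; the quotients $H_{k-1} = \ker \del_{k-1} / \Img \del_k$ are then torsion-free, while the top-degree groups $H_{\pm 3} = \ker \del_{\pm 3}$ are torsion-free as subgroups of finitely generated free abelian groups.

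The hard part is the second step. Mere entries in $\{-1, 0, 1\}$ do not preclude torsion: the matrix $\left(\begin{smallmatrix} 1 & 1 \\ 1 & -1 \end{smallmatrix}\right)$ has determinant $-2$ and produces $\Z/2\Z$-torsion in its cokernel. Excluding such patterns demands a case analysis in the spirit of the proof of \refoneiterate, tracking how primary points of consecutive Maslov indices can coexist inside the iteration interval $]\phi^{-1}(p), \phi(p)[$ and invoking primality at each step to rule out configurations that would contribute two non-cancelling $\pm 1$'s to a single row or column. A clean way to package this may be to iterate \refoneiterate\ twice -- once forward in $\phi$ to bound columns, once backward to bound rows -- so that the two applications together pin down a permutation-type skeleton for $\del_k$ on the small set of primary points left available by \refpositionprim\ and \refplace.
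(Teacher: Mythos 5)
Your first step reproduces the paper's Step~1 exactly: by \refoneiterate, for each pair at most two of the summands $m(p,\phi^n(q))$ are nonzero and, when two are, they cancel, so $m(\langle p\rangle,\langle q\rangle)\in\{-1,0,+1\}$ and every entry of every boundary matrix is $0$ or $\pm1$. The genuine gap is your second step. You rightly observe, with the example $\left(\begin{smallmatrix}1&1\\1&-1\end{smallmatrix}\right)$, that entries in $\{0,\pm1\}$ alone do not force a torsion-free cokernel, and you propose to close this by showing each $\del_k$ is a partial signed permutation matrix --- but you never prove this; you only announce that a case analysis ``in the spirit of'' \refoneiterate\ would be needed. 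Moreover, the claim is not supported by the results you invoke: \refoneiterate\ and \refplace\ constrain, for a \emph{fixed} pair $(p,q)$ with $\mcM(p,q)\neq\emptyset$, which iterates $\phi^n(q)$ of the \emph{same} orbit can also bound an immersion from $p$. They say nothing that prevents $\del\langle p\rangle$ from having nonzero coefficients on two distinct orbit classes $\langle q_1\rangle\neq\langle q_2\rangle$ (two nonzero entries in a row), nor two distinct classes $\langle p\rangle\neq\langle p'\rangle$ from both hitting the same $\langle q\rangle$ (two nonzero entries in a column). Indeed, nothing in the paper excludes $\del\langle p\rangle=\langle q_1\rangle+\langle q_2\rangle$; the proof of $\del\circ\del=0$ works by breaking and gluing with cancellation, which presupposes that such sums occur. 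So the ``partial signed permutation'' structure is a substantially stronger, unestablished (and quite possibly false) assertion, and your argument is incomplete without it or a replacement (e.g.\ total unimodularity of the boundary matrices).

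For comparison with the source: the paper's Step~2 does not carry out the case analysis you call for either. It notes that $\Img \del_{*+1} < \ker\del_* < C_*$ are finitely generated free, that the generators $\del\langle p\rangle$ of the image have coefficients in $\{0,\pm1\}$ with no common divisor other than $\pm1$, chooses a basis of the image among these vectors, asserts that none of them is a multiple (with multiplier of absolute value greater than one) of a basis vector of $\ker\del_*$, and concludes via \refquotient. You have therefore correctly located the delicate point of the argument --- passing from small matrix entries to trivial elementary divisors --- but your proposal, as written, leaves exactly that point open.
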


The universal coefficient theorem in homological algebra describes by means of torsion the dependence of homology groups on the chosen coefficient ring. Thus torsion-freeness implies that, for instance, homoclinic Floer homology computed with $\Z$-coefficients is the same as with $\Q$- or $\R$-coefficients.

Before we start with the proof of \refnotorsion\ we recall the following fact about quotients of free groups.

\begin{Lemma}[Baumslag $\&$ Chandler \cite{baumslag-chandler}, Corollary 6.17]
 \label{quotient}
 Let $G$ be a free abelian group with basis $g_1, \dots, g_l$ and let $H$ be the free abelian group generated by $u_1g_1, \dots, u_l g_l $ where $u_i \in \Z$. Then $G \slash H$ is a direct sum of cyclic groups of order $\uti_1, \dots, \uti_l$ where $\uti_i= \infty$ if $u_i=0$ and $\uti_i=\abs{u_i}$ otherwise.
\end{Lemma}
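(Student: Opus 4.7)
The plan is to exploit the fact that the generators of $H$ are obtained from the basis of $G$ simply by rescaling each basis element independently. This means both $G$ and $H$ split compatibly as direct sums indexed by $i \in \{1, \dots, l\}$, and the quotient should split in the same way.

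First, I would use the basis $g_1, \dots, g_l$ of $G$ to write the internal direct sum decomposition
\[
 G = \bigoplus_{i=1}^l \Z g_i,
\]
and observe that, since $H$ is generated by $u_1 g_1, \dots, u_l g_l$, one has likewise
\[
 H = \bigoplus_{i=1}^l u_i \Z g_i,
\]
where $u_i \Z g_i := \{k u_i g_i \mid k \in \Z\}$ is the cyclic subgroup of $\Z g_i$ generated by $u_i g_i$. The key point here is that $u_i \Z g_i \subseteq \Z g_i$ for each $i$, so the summands of $H$ are respectively contained in the matching summands of $G$; this is the structural feature that makes the lemma work.

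Next, I would construct the natural homomorphism
\[
 \Phi: G \to \bigoplus_{i=1}^l \Z g_i \big/ u_i \Z g_i
\]
sending $\sum_i n_i g_i$ to $\sum_i (n_i g_i + u_i \Z g_i)$. This map is surjective by construction, and a short direct check on coordinates shows that $\ker \Phi = \bigoplus_{i=1}^l u_i \Z g_i = H$. The first isomorphism theorem then yields
\[
 G \slash H \ \cong \ \bigoplus_{i=1}^l \Z g_i \big/ u_i \Z g_i.
\]

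Finally, I would identify each summand $\Z g_i / u_i \Z g_i$ with the standard cyclic group $\Z \slash u_i \Z$ via $g_i \mapsto 1$. The only remaining step is then the elementary fact that $\Z \slash u_i \Z$ is infinite cyclic when $u_i = 0$ (giving order $\uti_i = \infty$) and cyclic of order $\abs{u_i}$ when $u_i \neq 0$ (giving order $\uti_i = \abs{u_i}$, with the trivial group being the correct interpretation when $\abs{u_i}=1$). No step in this chain is delicate; the only place where one has to be careful is confirming that the internal direct sum decomposition of $H$ is genuine, i.e.\ that $H \cap \Z g_i = u_i \Z g_i$ and that the summands have trivial pairwise intersection, which is immediate because the $g_i$ are a basis of the ambient free group $G$.
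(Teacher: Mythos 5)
Your proof is correct. Note, however, that the paper itself offers no argument for this statement at all: it is quoted verbatim as Corollary 6.17 of Baumslag \& Chandler and used as a black box in the proof of Theorem \ref{notorsion}, so there is no in-paper route to compare yours against. Your coordinatewise argument --- splitting $G=\bigoplus_i \Z g_i$ and $H=\bigoplus_i u_i\Z g_i$ compatibly, applying the first isomorphism theorem to the obvious surjection onto $\bigoplus_i \Z g_i/u_i\Z g_i$, and identifying each summand with $\Z/u_i\Z$ --- is the standard proof of this fact and is complete, including the correct handling of the degenerate cases $u_i=0$ (infinite cyclic summand) and $\abs{u_i}=1$ (trivial summand).
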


We are now able to prove \refnotorsion.

\begin{proof}[{\bf Proof of \refnotorsion}]
It is enough to consider symplectomorphisms on $\R^2$ since, in the case of a closed surface of genus greater than zero, we may work on the universal cover as in Hohloch \cite{hohloch1}.

{\em Step 1:}
Let $p$ and $q$ be primary homoclinic points of relative index one. As shown in Hohloch \cite{hohloch1} in the text between Definition 9 and Definition 10, the space $\mcMhat(p,q)$ is either empty or contains exactly one element. According to \refoneiterate, there are either exactly zero, exactly one or exactly two exponents such that $\mcMhat(p, \phi^n(q))$ is nonempty and, in the last case, the signs $m(p, \cdot)$ have opposite sign. Thus we find 
\begin{align*}
 m(\langle p \rangle, \langle q \rangle) \in \{-1, 0, +1\}
\end{align*}
for all primary $p$ and $q$ with $\mu(p,q)=1$. 
This means that the coefficients of $\langle q \rangle$ in the sum
\beqs
\del \langle p \rangle = \sum_{\stackrel{\langle q \rangle \ primary}{\mu(\langle q \rangle)=\mu(\langle p \rangle)-1}} m(\langle p \rangle ,\langle q \rangle) \langle q \rangle
\eeqs
have never values different from $\{+1, 0, -1\}$. In particular, there is no common divisor of all the $m(\langle p \rangle ,\langle q \rangle)$ different from $\pm 1$.

{\em Step 2:} 
$$C_*= \bigoplus_{\stackrel{\langle p \rangle \ primary}{\Ind(\langle p \rangle)=*}} \Z \langle p \rangle$$
is a finitely generated abelian group such that its subgroups 
$$\Img \del_{*+1} < \ker \del_* < C_*$$ 
are also finitely generated and free with 
$$\rk (\Img \del_{*+1}) \leq \rk (\ker \del_*) \leq \rk (C_*).$$ 
In {\em Step 1} we saw that the generators of $\Img \del_{*+1}$ are of the form 
\beqs
\del \langle p \rangle = \sum_{\stackrel{q \ primary}{\mu(q)=\mu(p)-1}} m(\langle p \rangle ,\langle q \rangle) \langle q \rangle
\eeqs
with $m(\langle p \rangle , \cdot) \in \{0, \pm 1 \}$. Among these generators we choose a basis of $\Img \del_{*-1}$. These vectors are never a multiple  with absolute value of the multiplier greater than one of a basis vector of $\ker (\del_*)$. According to \refquotient, the quotient has no nontrivial cyclic subgroups of finite order, i.e.\ no torsion.
\end{proof}

Recall that, by \refpositionprim, the Maslov index of a primary point $p$ satisfies $\mu(p) \in \{ \pm1, \pm 2, \pm 3\}$. Therefore only primary homoclinic Floer chain groups $C_k$ with $k \in  \{ \pm1, \pm 2, \pm 3\}$ can be nontrivial such that the complex looks like
\begin{align*}
\cdots \rightarrow 0 \stackrel{\del_4}{\longrightarrow} C_3  \stackrel{\del_3}{\longrightarrow} C_2 \stackrel{\del_2}{\longrightarrow} C_1 \stackrel{\del_1}{\longrightarrow} 0 \stackrel{\del_{0}}{\longrightarrow} C_{-1} \stackrel{\del_{-1}}{\longrightarrow} C_{-2}\stackrel{\del_{-2}}{\longrightarrow} C_{-3} \stackrel{\del_{-3}}{\longrightarrow} 0 \rightarrow \cdots
\end{align*}
We have
\beqs
 \rk \ker \del_k + \rk \Img \del_k = \rk C_k =: \mfc_k
\eeqs
and we set $\mfh_k:=\rk H_k$. Due to \refnotorsion, the torsion rank of $H_k$ vanishes and we find

\begin{Theorem}[{\bf Homoclinic Morse inequalities}]
\label{homoclinicMorse}
For the rank of the primary homoclinic Floer chain and homology groups holds:
\begin{enumerate}[1)]
 \item 
$ \mbox{For } k \in \Z: \quad  \mfh_k  \leq \mfc_k.$
  \item
$ \mbox{For } j <-3 \mbox{ and } 3 < l: \quad  \sum_{i=j}^l  \mfc_i = |\mcHprti|.$
  \item
$ \mbox{For } j, l \in \Z,\ j \leq l: \quad  \sum_{i= j}^l \mfh_i \leq \sum_{i=j}^l  \mfc_i \leq |\mcHprti|.$
  \item
$ \mbox{For } j, l \in \Z,\ j \leq l,\ j\leq -3 : \quad \sum_{i= j}^l (-1)^{l-i} \mfh_i \leq \sum_{i=j}^l (-1)^{l-i} \mfc_i .$
\end{enumerate}
\end{Theorem}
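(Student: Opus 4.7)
The plan is to deduce all four inequalities from the torsion-freeness result \refnotorsion\ combined with elementary rank arithmetic on the finite chain complex $(C_*, \del_*)$. I would introduce the shorthand $z_k := \rk \ker \del_k$ and $r_k := \rk \Img \del_k$. Since each $C_k$ is a finitely generated free abelian group of rank $\mfc_k$, rank-nullity gives $\mfc_k = z_k + r_k$. Since \refnotorsion\ ensures every $H_k$ is torsion-free and $H_k = \ker \del_k / \Img \del_{k+1}$, we also have $\mfh_k = z_k - r_{k+1}$.

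Parts 1), 2), 3) are then immediate. Part 1) is the chain $\mfh_k = z_k - r_{k+1} \leq z_k \leq z_k + r_k = \mfc_k$. For part 2), \refpositionprim(iii) forces $\mfc_i = 0$ for $i \notin \{\pm 1, \pm 2, \pm 3\}$, while the Maslov grading decomposes $\mcHprti$ as a disjoint union of the finitely many pieces indexed by $\mu$, so $\sum_i \mfc_i = |\mcHprti|$; the hypotheses $j < -3 < 3 < l$ ensure the summation range already covers every nontrivial chain group. Part 3) follows by summing the inequality of part 1) termwise and then bounding the partial sum of the $\mfc_i$ by $|\mcHprti|$ via part 2).

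For part 4), I would perform the standard telescoping. The computation
\beqs
\mfc_i - \mfh_i = (z_i + r_i) - (z_i - r_{i+1}) = r_i + r_{i+1}
\eeqs
leads, after cancelling consecutive terms with opposite signs, to
\beqs
\sum_{i=j}^l (-1)^{l-i}(\mfc_i - \mfh_i) = (-1)^{l-j} r_j + r_{l+1}.
\eeqs
The hypothesis $j \leq -3$ gives $C_{j-1} = 0$, hence $\del_j = 0$ and $r_j = 0$, so the right hand side reduces to the nonnegative quantity $r_{l+1}$, which is exactly the claim.

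The only real subtlety is bookkeeping the boundary terms at the extremes $j$ and $l+1$ and tracking that $C_*$ is concentrated in degrees $\pm 1, \pm 2, \pm 3$; everything else is either direct substitution or has already been absorbed into \refnotorsion\ (which in turn rests on \refoneiterate). Once torsion-freeness is granted, the argument is purely algebraic and essentially identical to the derivation of the classical Morse inequalities recalled in \S\ref{classicalMorse}, so I do not anticipate any serious obstacle beyond the support check just mentioned.
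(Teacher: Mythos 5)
Your proposal is correct and follows essentially the same route as the paper: parts 1)--3) are identical, and part 4) rests on the same rank identities $\mfc_i = \rk\ker\del_i + \rk\Img\del_i$ and $\mfh_i = \rk\ker\del_i - \rk\Img\del_{i+1}$ combined with telescoping. Your closed-form version of the telescoped sum, $(-1)^{l-j}r_j + r_{l+1}$ with $r_j = 0$ forced by $C_{j-1}=0$, is in fact a slightly cleaner bookkeeping than the paper's reduction to $j=-3$ and separate treatment of the parities of $l-j$, but it is the same argument.
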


\begin{proof}
Due to \refnotorsion, the torsion rank vanishes. We estimate
\begin{enumerate}[1)]
 \item 
 
 $\mfh_k = \rk \ker \del_k - \rk \Img \del_{k+1} \leq   \rk \ker \del_k \leq \rk \ker \del_k + \rk \Img \del_k = \mfc_k$.

  \item
  
For $j <-3 $ and $3 < l$ holds: 
$$| \mcHprti | =\sum_{i=-3}^3 | \{ \langle p \rangle  \in \mcHprti \mid \mu(p)= i\} | = \sum_{i=-3}^3 \mfc_i = \sum_{i=j}^l  \mfc_i .$$

\item

follows from 1) and 2).

\item

Let $j, l \in \Z$ with $ j \leq l$. W.l.o.g. we assume $j=-3$. Keep in mind that thus $\rk \Img \del_j =0$. First consider the case $(l-j)$ even. Then we have
\begin{align*}	
& (-1)^{l-j} \mfc_j + (-1)^{l-j-1} \mfc_{j+1} + (-1)^{l-j-2} \mfc_{j+2} + \dots + (-1)^{l-l} \mfc_l \\
& = \mfc_j - \mfc_{j+1} + \mfc_{j+2} - \dots + \mfc_l \\
&  = \rk \ker \del_j + \rk \Img \del_j - \rk \ker \del_{j+1} - \rk \Img \del_{j+1} +  \rk \ker \del_{j+2} + \rk \Img \del_{j+2} \\
&  \quad - \dots + \rk \ker \del_l + \rk \Img \del_l \\
& = 0 + (\rk \ker \del_j -  \rk \Img \del_{j+1}) - (  \rk \ker \del_{j+1} -  \rk \Img \del_{j+2} )  \\
& \quad + (\rk \ker \del_{j+2} - \rk \Img \del_{j+3}) - \dots + ( \rk \ker \del_l -  \rk \Img \del_{l+1}) +  \rk \ker \del_{l+1} \\
& = (-1)^{l-j} \mfh_j + (-1)^{l-j-1} \mfh_{j+1} + (-1)^{l-j-2} \mfh_{j+2} + \dots + (-1)^{l-l} \mfh_l  +  \rk \ker \del_{l+1} \\
&  \geq (-1)^{l-j} \mfh_j + (-1)^{l-j-1} \mfh_{j+1} + (-1)^{l-j-2} \mfh_{j+2} + \dots + (-1)^{l-l} \mfh_l.
\end{align*}
The case $(j-l)$ odd follows similarly which finishes the proof.
\end{enumerate}
\end{proof}

%%%%%%%%%%%%%%%%%%%%%%%%%%%%%%%%%%%%%%%%%%%%%%%%%%%%%%%%%%%%%%%%%%%%%%%%%%%%%%%%%%%%%%%%%%%%%%%%%%%%%%%
%%%%%%%%%  bibliography  %%%%%%%%%%%%%%%%%%%%%%%%%%%%%%%%%%%%%%%%%%%%%%%%%%%%%%%%%%%%%%%%%%%%%%%%%%%%%%
%%%%%%%%%%%%%%%%%%%%%%%%%%%%%%%%%%%%%%%%%%%%%%%%%%%%%%%%%%%%%%%%%%%%%%%%%%%%%%%%%%%%%%%%%%%%%%%%%%%%%%

\end{document}